\documentclass{amsart}
\usepackage{amsfonts}

\setcounter{MaxMatrixCols}{10}

\newtheorem{theorem}{Theorem}
\theoremstyle{plain}

\newtheorem{corollary}{Corollary}

\newtheorem{definition}{Definition}

\newtheorem{lemma}{Lemma}

\newtheorem{proposition}{Proposition}
\newtheorem{remark}{Remark}

\numberwithin{equation}{section}
\input{tcilatex}

\begin{document}
\title[Inequalities for Quantum $f$-Divergence]{Inequalities for Quantum $f$%
-Divergence of Trace Class Operators in Hilbert Spaces }
\author{S.S. Dragomir$^{1,2}$}
\address{$^{1}$Mathematics, School of Engineering \& Science\\
Victoria University, PO Box 14428\\
Melbourne City, MC 8001, Australia.}
\email{sever.dragomir@vu.edu.au}
\urladdr{http://rgmia.org/dragomir}
\address{$^{2}$School of Computational \& Applied Mathematics, University of
the Witwatersrand, Private Bag 3, Johannesburg 2050, South Africa}
\subjclass{47A63; 47A99.}
\keywords{Selfadjoint bounded linear operators, Functions of operators,
Trace of operators, quantum divergence measures, Umegaki and Tsallis
relative entropies.}

\begin{abstract}
Some inequalities for quantum $f$-divergence of trace class operators in
Hilbert spaces are obtained. It is shown that for normalised convex
functions it is nonnegative. Some upper bounds for quantum $f$-divergence in
terms of variational and $\chi ^{2}$-distance are provided. Applications for
some classes of divergence measures such as Umegaki and Tsallis relative
entropies are also given.
\end{abstract}

\maketitle

\section{Introduction}

Let $\left( X,\mathcal{A}\right) $ be a measurable space satisfying $%
\left\vert \mathcal{A}\right\vert >2$ and $\mu $ be a $\sigma $-finite
measure on $\left( X,\mathcal{A}\right) .$ Let $\mathcal{P}$ be the set of
all probability measures on $\left( X,\mathcal{A}\right) $ which are
absolutely continuous with respect to $\mu .$ For $P,$ $Q\in \mathcal{P}$,
let $p=\frac{dP}{d\mu }$ and $q=\frac{dQ}{d\mu }$ denote the \textit{%
Radon-Nikodym }derivatives of $P$ and $Q$ with respect to $\mu .$

Two probability measures $P,$ $Q\in \mathcal{P}$ are said to be \textit{%
orthogonal} and we denote this by $Q\perp P$ if%
\begin{equation*}
P\left( \left\{ q=0\right\} \right) =Q\left( \left\{ p=0\right\} \right) =1.
\end{equation*}

Let $f:[0,\infty )\rightarrow (-\infty ,\infty ]$ be a convex function that
is continuous at $0,$ i.e., $f\left( 0\right) =\lim_{u\downarrow 0}f\left(
u\right) .$

In 1963, I. Csisz\'{a}r \cite{IC1} introduced the concept of $f$-divergence
as follows.

\begin{definition}
\label{d1.1}Let $P,$ $Q\in \mathcal{P}$. Then%
\begin{equation}
I_{f}\left( Q,P\right) =\int_{X}p\left( x\right) f\left[ \frac{q\left(
x\right) }{p\left( x\right) }\right] d\mu \left( x\right) ,  \label{1.1}
\end{equation}%
is called the $f$-divergence of the probability distributions $Q$ and $P.$
\end{definition}

\begin{remark}
\label{r.1}Observe that, the integrand in the formula (\ref{1.1}) is
undefined when $p\left( x\right) =0.$ The way to overcome this problem is to
postulate for $f$ as above that%
\begin{equation}
0f\left[ \frac{q\left( x\right) }{0}\right] =q\left( x\right)
\lim_{u\downarrow 0}\left[ uf\left( \frac{1}{u}\right) \right] ,\text{ }x\in
X.  \label{1.1.a}
\end{equation}
\end{remark}

We now give some examples of $f$-divergences that are well-known and often
used in the literature (see also \cite{CDO}).

\subsection{The Class of $\protect\chi ^{\protect\alpha }$-Divergences}

The $f$-divergences of this class, which is generated by the function $\chi
^{\alpha },$ $\alpha \in \lbrack 1,\infty ),$ defined by%
\begin{equation*}
\chi ^{\alpha }\left( u\right) =\left\vert u-1\right\vert ^{\alpha },\ \ \
u\in \lbrack 0,\infty )
\end{equation*}%
have the form%
\begin{equation}
I_{f}\left( Q,P\right) =\int_{X}p\left\vert \frac{q}{p}-1\right\vert
^{\alpha }d\mu =\int_{X}p^{1-\alpha }\left\vert q-p\right\vert ^{\alpha
}d\mu .  \label{1.2}
\end{equation}%
From this class only the parameter $\alpha =1$ provides a distance in the
topological sense, namely the \textit{total variation distance }$V\left(
Q,P\right) =\int_{X}\left\vert q-p\right\vert d\mu .$ The most prominent
special case of this class is, however, \textit{Karl Pearson's }$\chi ^{2}$-%
\textit{divergence }%
\begin{equation*}
\chi ^{2}\left( Q,P\right) =\int_{X}\frac{q^{2}}{p}d\mu -1
\end{equation*}%
that is obtained for $\alpha =2.$

\subsection{Dichotomy Class}

From this class, generated by the function $f_{\alpha }:[0,\infty
)\rightarrow \mathbb{R}$%
\begin{equation*}
f_{\alpha }\left( u\right) =\left\{ 
\begin{array}{ll}
u-1-\ln u & \text{for\ \ }\alpha =0; \\ 
&  \\ 
\frac{1}{\alpha \left( 1-\alpha \right) }\left[ \alpha u+1-\alpha -u^{\alpha
}\right] & \text{for\ \ }\alpha \in \mathbb{R}\backslash \left\{ 0,1\right\}
; \\ 
&  \\ 
1-u+u\ln u & \text{for\ \ }\alpha =1;%
\end{array}%
\right.
\end{equation*}%
only the parameter $\alpha =\frac{1}{2}$ $\left( f_{\frac{1}{2}}\left(
u\right) =2\left( \sqrt{u}-1\right) ^{2}\right) $ provides a distance,
namely, the \textit{Hellinger distance}%
\begin{equation*}
H\left( Q,P\right) =\left[ \int_{X}\left( \sqrt{q}-\sqrt{p}\right) ^{2}d\mu %
\right] ^{\frac{1}{2}}.
\end{equation*}

Another important divergence is the \textit{Kullback-Leibler divergence}
obtained for $\alpha =1,$%
\begin{equation*}
KL\left( Q,P\right) =\int_{X}q\ln \left( \frac{q}{p}\right) d\mu .
\end{equation*}

\subsection{Matsushita's Divergences}

The elements of this class, which is generated by the function $\varphi
_{\alpha },$ $\alpha \in (0,1]$ given by%
\begin{equation*}
\varphi _{\alpha }\left( u\right) :=\left\vert 1-u^{\alpha }\right\vert ^{%
\frac{1}{\alpha }},\ \ \ u\in \lbrack 0,\infty ),
\end{equation*}%
are prototypes of metric divergences, providing the distances $\left[
I_{\varphi _{\alpha }}\left( Q,P\right) \right] ^{\alpha }.$

\subsection{Puri-Vincze Divergences}

This class is generated by the functions $\Phi _{\alpha },$ $\alpha \in
\lbrack 1,\infty )$ given by%
\begin{equation*}
\Phi _{\alpha }\left( u\right) :=\frac{\left\vert 1-u\right\vert ^{\alpha }}{%
\left( u+1\right) ^{\alpha -1}},\ \ \ u\in \lbrack 0,\infty ).
\end{equation*}%
It has been shown in \cite{KOV} that this class provides the distances $%
\left[ I_{\Phi _{\alpha }}\left( Q,P\right) \right] ^{\frac{1}{\alpha }}.$

\subsection{Divergences of Arimoto-type}

This class is generated by the functions%
\begin{equation*}
\Psi _{\alpha }\left( u\right) :=\left\{ 
\begin{array}{ll}
\frac{\alpha }{\alpha -1}\left[ \left( 1+u^{\alpha }\right) ^{\frac{1}{%
\alpha }}-2^{\frac{1}{\alpha }-1}\left( 1+u\right) \right] & \text{for\ \ }%
\alpha \in \left( 0,\infty \right) \backslash \left\{ 1\right\} ; \\ 
&  \\ 
\left( 1+u\right) \ln 2+u\ln u-\left( 1+u\right) \ln \left( 1+u\right) & 
\text{for\ \ }\alpha =1; \\ 
&  \\ 
\frac{1}{2}\left\vert 1-u\right\vert & \text{for\ \ }\alpha =\infty .%
\end{array}%
\right.
\end{equation*}%
It has been shown in \cite{OV} that this class provides the distances $\left[
I_{\Psi _{\alpha }}\left( Q,P\right) \right] ^{\min \left( \alpha ,\frac{1}{%
\alpha }\right) }$ for $\alpha \in \left( 0,\infty \right) $ and $\frac{1}{2}%
V\left( Q,P\right) $ for $\alpha =\infty .$

For $f$ continuous convex on $[0,\infty )$ we obtain the $\ast $-\textit{%
conjugate }function of $f$ by%
\begin{equation*}
f^{\ast }\left( u\right) =uf\left( \frac{1}{u}\right) ,\ \ \ u\in \left(
0,\infty \right)
\end{equation*}%
and%
\begin{equation*}
f^{\ast }\left( 0\right) =\lim_{u\downarrow 0}f^{\ast }\left( u\right) .
\end{equation*}%
It is also known that if $f$ is continuous convex on $[0,\infty )$ then so
is $f^{\ast }.$

The following two theorems contain the most basic properties of $f$%
-divergences. For their proofs we refer the reader to Chapter 1 of \cite{LV}
(see also \cite{CDO}).

\begin{theorem}[Uniqueness and Symmetry Theorem]
\label{t3.1}Let $f,f_{1}$ be continuous convex on $[0,\infty ).$ We have%
\begin{equation*}
I_{f_{1}}\left( Q,P\right) =I_{f}\left( Q,P\right) ,
\end{equation*}%
for all $P,Q\in \mathcal{P}$ if and only if there exists a constant $c\in 
\mathbb{R}$ such that%
\begin{equation*}
f_{1}\left( u\right) =f\left( u\right) +c\left( u-1\right) ,
\end{equation*}%
for any $u\in \lbrack 0,\infty ).$
\end{theorem}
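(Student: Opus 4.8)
The plan is to prove the two implications separately, the reverse (sufficiency) direction being a short computation and the forward (necessity) direction carrying the real content. For sufficiency, suppose $f_{1}\left( u\right) =f\left( u\right) +c\left( u-1\right) $ and insert this into the defining integral \eqref{1.1}. On the set $\left\{ p>0\right\} $ the integrand of $I_{f_{1}}$ exceeds that of $I_{f}$ by $p\cdot c\left( \frac{q}{p}-1\right) =c\left( q-p\right) $, while on $\left\{ p=0\right\} $ the convention \eqref{1.1.a} shows the two integrands differ by $cq$, since $\lim_{u\downarrow 0}uf_{1}\left( 1/u\right) =\lim_{u\downarrow 0}uf\left( 1/u\right) +c$. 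Collecting these contributions gives $I_{f_{1}}\left( Q,P\right) -I_{f}\left( Q,P\right) =c\left[ \int_{\left\{ p>0\right\} }\left( q-p\right) d\mu +\int_{\left\{ p=0\right\} }q\,d\mu \right] $, and since $\int_{X}p\,d\mu =\int_{X}q\,d\mu =1$ this bracket vanishes. Hence $I_{f_{1}}=I_{f}$ for all $P,Q\in \mathcal{P}$.

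For necessity I would set $g:=f_{1}-f$, which is continuous on $[0,\infty )$ as a difference of continuous convex functions and satisfies $I_{g}\left( Q,P\right) =0$ for every $P,Q\in \mathcal{P}$; the goal becomes to show $g\left( u\right) =c\left( u-1\right) $. The engine is to feed the identity $I_{g}\left( Q,P\right) =0$ with a sufficiently rich family of two-atom distributions. Using $\left\vert \mathcal{A}\right\vert >2$ together with $\sigma $-finiteness, I would fix two disjoint sets $A_{1},A_{2}\in \mathcal{A}$ of positive finite measure and, for prescribed weights $\pi _{1},\pi _{2}>0$ with $\pi _{1}+\pi _{2}=1$ and prescribed ratios $u,v\in \left( 0,\infty \right) $, build $P,Q$ constant on each $A_{i}$ with $P\left( A_{i}\right) =\pi _{i}$ and $q/p=u$ on $A_{1}$, $q/p=v$ on $A_{2}$. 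The requirement $\int_{X}q\,d\mu =\int_{X}p\,d\mu =1$ then forces $\pi _{1}\left( u-1\right) +\pi _{2}\left( v-1\right) =0$, whereas $I_{g}\left( Q,P\right) =0$ reads $\pi _{1}g\left( u\right) +\pi _{2}g\left( v\right) =0$.

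Eliminating the weights between these two relations—equivalently, noting that the nonzero vector $\left( \pi _{1},\pi _{2}\right) $ is orthogonal to both $\left( g\left( u\right) ,g\left( v\right) \right) $ and $\left( u-1,v-1\right) $ in $\mathbb{R}^{2}$, forcing these to be parallel—yields $g\left( u\right) \left( v-1\right) =g\left( v\right) \left( u-1\right) $, i.e. $\dfrac{g\left( u\right) }{u-1}=\dfrac{g\left( v\right) }{v-1}$, valid whenever the weights can be chosen positive, which happens exactly when $u-1$ and $1-v$ share a sign. Fixing a reference $v_{0}<1$ and letting $u>1$ vary then gives $g\left( u\right) /\left( u-1\right) =g\left( v_{0}\right) /\left( v_{0}-1\right) =:c$ for every $u>1$; pairing any $w<1$ with a fixed $u_{0}>1$ gives $g\left( w\right) /\left( w-1\right) =g\left( u_{0}\right) /\left( u_{0}-1\right) =c$ as well. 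Hence $g\left( u\right) =c\left( u-1\right) $ for all $u\neq 1$, and continuity of $g$ at $u=1$ and $u=0$ extends the formula to those points, so that $f_{1}\left( u\right) =f\left( u\right) +c\left( u-1\right) $ on all of $[0,\infty )$.

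I expect the main obstacle to lie in the necessity half: concretely, guaranteeing from the standing assumptions on $\left( X,\mathcal{A},\mu \right) $ that the required two-atom distributions genuinely exist—this is precisely where $\left\vert \mathcal{A}\right\vert >2$ is used—and handling the sign bookkeeping in the constraint $\pi _{1}\left( u-1\right) +\pi _{2}\left( v-1\right) =0$ so that the weights remain strictly positive and the branch-constants are correctly identified into a single $c$. By contrast, the sufficiency computation and the final boundary extension to $u=0,1$ are routine once the interior functional identity is secured.
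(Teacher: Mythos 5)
The paper does not prove this theorem at all: it is quoted as background and the proof is deferred to Chapter 1 of \cite{LV} (see also \cite{CDO}), so there is nothing in the text to compare your argument against. On its own merits, your proof is correct and is essentially the standard argument for this result. The sufficiency computation is right, including the check that the convention (\ref{1.1.a}) contributes the extra term $cq$ on $\left\{ p=0\right\}$ so that the correction integrates to $c\left( 1-1\right) =0$. The necessity half via two-atom densities is also sound: the orthogonality-in-$\mathbb{R}^{2}$ elimination correctly yields $g\left( u\right) /\left( u-1\right) =g\left( v\right) /\left( v-1\right)$ whenever $u>1>v$, the two branch constants are glued into one $c$ by pairing across $1$, and continuity handles $u=1$ and $u=0$.

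The one point you rightly flag as delicate is genuine: the hypothesis $\left\vert \mathcal{A}\right\vert >2$ by itself does not guarantee two disjoint sets of \emph{positive} $\mu$-measure (e.g.\ $\mu$ could be concentrated on a single atom, in which case $\mathcal{P}$ is a singleton and the necessity direction fails as stated). This is a defect of the standard formulation inherited from the literature rather than of your argument; the intended reading, and the one under which your construction goes through, is that $\left( X,\mathcal{A},\mu \right)$ is rich enough to support the two-atom distributions you build. With that understood, your proof is complete.
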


\begin{theorem}[Range of Values Theorem]
\label{t3.2}Let $f:[0,\infty )\rightarrow \mathbb{R}$ be a continuous convex
function on $[0,\infty ).$

For any $P,Q\in \mathcal{P}$, we have the double inequality%
\begin{equation}
f\left( 1\right) \leq I_{f}\left( Q,P\right) \leq f\left( 0\right) +f^{\ast
}\left( 0\right) .  \label{3.1}
\end{equation}

\begin{enumerate}
\item[(i)] If $P=Q,$ then the equality holds in the first part of (\ref{3.1}%
).
\end{enumerate}

If $f$ is strictly convex at $1,$ then the equality holds in the first part
of (\ref{3.1}) if and only if $P=Q;$

\begin{enumerate}
\item[(ii)] If $Q\perp P,$ then the equality holds in the second part of (%
\ref{3.1}).
\end{enumerate}

If $f\left( 0\right) +f^{\ast }\left( 0\right) <\infty ,$ then equality
holds in the second part of (\ref{3.1}) if and only if $Q\perp P.$
\end{theorem}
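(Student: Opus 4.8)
The plan is to prove the two bounds separately by reducing everything to the set where $p>0$ and then applying, respectively, Jensen's inequality and a pointwise chord estimate for convex functions; the equality cases are then read off from the equality cases of those two tools. First I would set $A=\{p>0\}$ and $B=\{p=0\}$, and put $\lambda:=Q(B)=\int_B q\,d\mu$. By the convention \eqref{1.1.a} the integrand on $B$ equals $q\,f^{\ast}(0)$, so
\[
I_f(Q,P)=\int_A p\,f\!\left(\frac{q}{p}\right)d\mu+\lambda f^{\ast}(0),
\]
where $\int_A p\,d\mu=1$ and $\int_A q\,d\mu=1-\lambda$. Note also that, since $f^{\ast}(u)=uf(1/u)$, one has $f^{\ast}(0)=\lim_{v\to\infty}f(v)/v$, i.e. $f^{\ast}(0)$ is the asymptotic slope of $f$.

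For the lower bound, $p\,d\mu$ is a probability measure on $A$ and $f$ is convex, so Jensen's inequality gives $\int_A p\,f(q/p)\,d\mu\ge f\!\left(\int_A q\,d\mu\right)=f(1-\lambda)$, whence $I_f(Q,P)\ge f(1-\lambda)+\lambda f^{\ast}(0)$. It then remains to verify $f(1-\lambda)+\lambda f^{\ast}(0)\ge f(1)$, which holds because the asymptotic slope dominates every finite difference quotient of a convex function; in particular $[f(1)-f(1-\lambda)]/\lambda\le f^{\ast}(0)$.

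For the upper bound I would first establish the pointwise estimate $f(t)\le f(0)+t f^{\ast}(0)$ for all $t\ge 0$. This follows from the chord inequality $f(t)\le(1-t/M)f(0)+(t/M)f(M)$ valid on $[0,M]$, letting $M\to\infty$ and using $f(M)/M\to f^{\ast}(0)$ (the bound being trivial if $f^{\ast}(0)=\infty$). Applying it with $t=q/p$, multiplying by $p$ and integrating over $A$ yields $\int_A p\,f(q/p)\,d\mu\le f(0)+(1-\lambda)f^{\ast}(0)$, and adding $\lambda f^{\ast}(0)$ gives $I_f(Q,P)\le f(0)+f^{\ast}(0)$.

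Finally the equality cases. If $P=Q$ then $\lambda=0$ and $q/p\equiv 1$, so $I_f=f(1)$; conversely, strict convexity at $1$ makes the Jensen step strict unless $q/p$ equals $1$ a.e. on $A$, which together with $\lambda=0$ forces $P=Q$. For the second bound, $Q\perp P$ means $\lambda=1$ and $q=0$ a.e. on $A$, giving $I_f=f(0)+f^{\ast}(0)$; conversely, when $f(0)+f^{\ast}(0)<\infty$, equality forces the chord estimate to be tight a.e. on $A$, and its strictness for $t>0$ then yields $q=0$ a.e. on $A$, hence $\lambda=1$, i.e. $Q\perp P$. I expect the main obstacle to be the careful bookkeeping of the singular part $B$ together with the two distinct roles played by $f^{\ast}(0)$ — dominating the difference quotient in the lower bound and governing the limiting chord in the upper bound — and, for the equality statements, translating the strictness of Jensen's inequality and of the chord estimate back into the conditions $P=Q$ and $Q\perp P$.
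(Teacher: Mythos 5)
The paper does not actually prove this theorem: it is quoted from Chapter 1 of \cite{LV} (see also \cite{CDO}), so your argument can only be measured against the standard proof, which it essentially reproduces. The derivation of the two inequalities in (\ref{3.1}) is correct: the splitting of $X$ into $A=\{p>0\}$ and $B=\{p=0\}$, the identification of the contribution of $B$ as $\lambda f^{\ast }\left( 0\right) $ via the convention (\ref{1.1.a}), Jensen's inequality for the probability measure $p\,d\mu $ on $A$, the estimate $\left[ f\left( 1\right) -f\left( 1-\lambda \right) \right] /\lambda \leq f^{\ast }\left( 0\right) $ coming from monotonicity of chord slopes, and the pointwise bound $f\left( t\right) \leq f\left( 0\right) +tf^{\ast }\left( 0\right) $ obtained by letting $M\rightarrow \infty $ in the chord inequality on $\left[ 0,M\right] $ are all sound, including the degenerate case $f^{\ast }\left( 0\right) =\infty $.

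Two points in the equality analysis need patching. First, in the converse of (i) you invoke strict convexity at $1$ to force $q/p=1$ a.e.\ on $A$, but your Jensen step has mean $1-\lambda $, not $1$; strict convexity at $1$ says nothing about equality in Jensen when $\lambda >0$. You must first rule out $\lambda >0$: equality in your second step $f\left( 1-\lambda \right) +\lambda f^{\ast }\left( 0\right) =f\left( 1\right) $ with $\lambda >0$ forces the chord slope over $\left[ 1-\lambda ,1\right] $ to equal the asymptotic slope, hence $f$ is affine on $\left[ 1-\lambda ,\infty \right) $, contradicting strict convexity at $1$; only then is the mean equal to $1$ and the strict-convexity equality condition for Jensen applicable. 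Second, in the converse of (ii) you assert that $f\left( t\right) <f\left( 0\right) +tf^{\ast }\left( 0\right) $ for $t>0$; by the same chord-slope monotonicity, equality at a single $t_{0}>0$ forces $f$ to be affine on all of $\left[ 0,\infty \right) $, in which case $I_{f}\left( Q,P\right) =f\left( 0\right) +f^{\ast }\left( 0\right) $ for \emph{every} pair $P,Q$ and the stated ``only if'' fails (take $f\left( u\right) =u-1$). So the strictness you need holds exactly when $f$ is not affine, a hypothesis the statement leaves implicit; your proof should record this dichotomy rather than assert strictness outright.
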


The following result is a refinement of the second inequality in Theorem \ref%
{t3.2} (see \cite[Theorem 3]{CDO}).

\begin{theorem}
\label{t3.3}Let $f$ be a continuous convex function on $[0,\infty )$ with $%
f\left( 1\right) =0$ ($f$ is normalised) and $f\left( 0\right) +f^{\ast
}\left( 0\right) <\infty .$ Then%
\begin{equation}
0\leq I_{f}\left( Q,P\right) \leq \frac{1}{2}\left[ f\left( 0\right)
+f^{\ast }\left( 0\right) \right] V\left( Q,P\right)  \label{3.2}
\end{equation}%
for any $Q,P\in \mathcal{P}$.
\end{theorem}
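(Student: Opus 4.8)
The lower bound requires no work beyond what is already available: since $f$ is continuous and convex on $[0,\infty)$ with $f(1)=0$, the Range of Values Theorem (Theorem~\ref{t3.2}) gives $0=f(1)\le I_{f}(Q,P)$. The content of the statement is thus the upper bound, and my plan is to dominate the integrand $p\,f(q/p)$ pointwise by an affine expression in $|q-p|$ and then to use that $P$ and $Q$ are probability measures.

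First I would record two consequences of convexity and normalisation. For $t\in[0,1]$, writing $t=(1-t)\cdot 0+t\cdot 1$ and using convexity together with $f(1)=0$ gives
\[
f(t)\le (1-t)f(0).
\]
For $t>1$ I would pass to the $\ast$-conjugate $f^{\ast}(u)=uf(1/u)$, which is again continuous and convex on $[0,\infty)$ and satisfies $f^{\ast}(1)=f(1)=0$; applying the previous inequality to $f^{\ast}$ at the point $1/t\in[0,1)$ and multiplying through by $t$ yields
\[
f(t)=t\,f^{\ast}(1/t)\le (t-1)f^{\ast}(0).
\]
Both right-hand sides are finite since $f(0)+f^{\ast}(0)<\infty$ by hypothesis.

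Next I would split $X$ into $\{q<p\}$ and $\{q\ge p\}$ and insert these bounds with $t=q/p$. On $\{q<p\}$ (where $p>0$) the first inequality gives $p\,f(q/p)\le (p-q)f(0)$, while on $\{q\ge p\}$ the second gives $p\,f(q/p)\le (q-p)f^{\ast}(0)$; the convention (\ref{1.1.a}) guarantees that this last estimate persists, with equality, on the portion of $\{q\ge p\}$ where $p=0$. Integrating produces
\[
I_{f}(Q,P)\le f(0)\int_{\{q<p\}}(p-q)\,d\mu+f^{\ast}(0)\int_{\{q\ge p\}}(q-p)\,d\mu .
\]

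The concluding step is the normalisation identity. Because $\int_{X}p\,d\mu=\int_{X}q\,d\mu=1$, we have $\int_{X}(q-p)\,d\mu=0$; splitting this over $\{q\ge p\}$ and $\{q<p\}$ shows that the two integrals above are equal, each equal to $\frac{1}{2}\int_{X}|q-p|\,d\mu=\frac{1}{2}V(Q,P)$. Substituting gives precisely $I_{f}(Q,P)\le \frac{1}{2}[f(0)+f^{\ast}(0)]V(Q,P)$. I expect the only delicate point to be the bookkeeping at $p=0$, where the integrand is fixed by (\ref{1.1.a}); the rest reduces to the two convexity estimates and the observation that the positive and negative parts of $q-p$ carry equal mass.
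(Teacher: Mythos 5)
Your argument is correct. Note that the paper itself offers no proof of this theorem: it is imported from the literature with the citation \cite[Theorem 3]{CDO}, so there is nothing internal to compare against. What you give is essentially the standard proof of that cited result: the two convexity estimates $f(t)\leq (1-t)f(0)$ on $[0,1]$ and $f(t)\leq (t-1)f^{\ast }(0)$ on $[1,\infty )$ are exactly the statement that $f$ is majorised by the piecewise-linear function interpolating $f(0)$, $f(1)=0$ and the asymptote determined by $f^{\ast }(0)$ (equivalently, by $\frac{1}{2}\left[ f(0)+f^{\ast }(0)\right] \left\vert t-1\right\vert +\frac{1}{2}\left[ f(0)-f^{\ast}(0)\right] \left( 1-t\right) $, whose linear part integrates to zero), and the identity $\int_{\{q\geq p\}}(q-p)\,d\mu =\int_{\{q<p\}}(p-q)\,d\mu =\frac{1}{2}V(Q,P)$ finishes it; your treatment of the set $\{p=0\}$ via the convention (\ref{1.1.a}) is the right bookkeeping.
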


For other inequalities for $f$-divergence see \cite{CD}, \cite{SSD11}-\cite%
{SSD1}.

Motivated by the above results, in this paper we obtain some new
inequalities for quantum $f$-divergence of trace class operators in Hilbert
spaces. It is shown that for normalised convex functions it is nonnegative.
Some upper bounds for quantum $f$-divergence in terms of variational and $%
\chi ^{2}$-distance are provided. Applications for some classes of
divergence measures such as Umegaki and Tsallis relative entropies are also
given.

In what follows we recall some facts we need concerning the trace of
operators and quantum $f$-divergence for trace class operators in infinite
dimensional complex Hilbert spaces.

\section{Some Preliminary Facts}

\subsection{Some Facts on Trace of Operators}

Let $\left( H,\left\langle \cdot ,\cdot \right\rangle \right) $ be a complex
Hilbert space and $\left\{ e_{i}\right\} _{i\in I}$ an \textit{orthonormal
basis} of $H.$ We say that $A\in \mathcal{B}\left( H\right) $ is a \textit{%
Hilbert-Schmidt operator} if%
\begin{equation}
\sum_{i\in I}\left\Vert Ae_{i}\right\Vert ^{2}<\infty .  \label{e.1.1.a}
\end{equation}%
It is well know that, if $\left\{ e_{i}\right\} _{i\in I}$ and $\left\{
f_{j}\right\} _{j\in J}$ are orthonormal bases for $H$ and $A\in \mathcal{B}%
\left( H\right) $ then%
\begin{equation}
\sum_{i\in I}\left\Vert Ae_{i}\right\Vert ^{2}=\sum_{j\in I}\left\Vert
Af_{j}\right\Vert ^{2}=\sum_{j\in I}\left\Vert A^{\ast }f_{j}\right\Vert ^{2}
\label{e.1.2.a}
\end{equation}%
showing that the definition (\ref{e.1.1.a}) is independent of the
orthonormal basis and $A$ is a Hilbert-Schmidt operator iff $A^{\ast }$ is a
Hilbert-Schmidt operator.

Let $\mathcal{B}_{2}\left( H\right) $ the set of Hilbert-Schmidt operators
in $\mathcal{B}\left( H\right) .$ For $A\in \mathcal{B}_{2}\left( H\right) $
we define%
\begin{equation}
\left\Vert A\right\Vert _{2}:=\left( \sum_{i\in I}\left\Vert
Ae_{i}\right\Vert ^{2}\right) ^{1/2}  \label{e.1.3.1}
\end{equation}%
for $\left\{ e_{i}\right\} _{i\in I}$ an orthonormal basis of $H.$ This
definition does not depend on the choice of the orthonormal basis.

Using the triangle inequality in $l^{2}\left( I\right) ,$ one checks that $%
\mathcal{B}_{2}\left( H\right) $ is a \textit{vector space} and that $%
\left\Vert \cdot \right\Vert _{2}$ is a norm on $\mathcal{B}_{2}\left(
H\right) ,$ which is usually called in the literature as the \textit{%
Hilbert-Schmidt norm}.

Denote \textit{the modulus} of an operator $A\in \mathcal{B}\left( H\right) $
by $\left\vert A\right\vert :=\left( A^{\ast }A\right) ^{1/2}.$

Because $\left\Vert \left\vert A\right\vert x\right\Vert =\left\Vert
Ax\right\Vert $ for all $x\in H,$ $A$ is Hilbert-Schmidt iff $\left\vert
A\right\vert $ is Hilbert-Schmidt and $\left\Vert A\right\Vert
_{2}=\left\Vert \left\vert A\right\vert \right\Vert _{2}.$ From (\ref%
{e.1.2.a}) we have that if $A\in \mathcal{B}_{2}\left( H\right) ,$ then $%
A^{\ast }\in \mathcal{B}_{2}\left( H\right) $ and $\left\Vert A\right\Vert
_{2}=\left\Vert A^{\ast }\right\Vert _{2}.$

The following theorem collects some of the most important properties of
Hilbert-Schmidt operators:

\begin{theorem}
\label{t.1.1}We have:

(i) $\left( \mathcal{B}_{2}\left( H\right) ,\left\Vert \cdot \right\Vert
_{2}\right) $ is a Hilbert space with inner product 
\begin{equation}
\left\langle A,B\right\rangle _{2}:=\sum_{i\in I}\left\langle
Ae_{i},Be_{i}\right\rangle =\sum_{i\in I}\left\langle B^{\ast
}Ae_{i},e_{i}\right\rangle  \label{e.1.4.1}
\end{equation}%
and the definition does not depend on the choice of the orthonormal basis $%
\left\{ e_{i}\right\} _{i\in I}$;

(ii) We have the inequalities 
\begin{equation}
\left\Vert A\right\Vert \leq \left\Vert A\right\Vert _{2}  \label{e.1.4.a}
\end{equation}%
for any $A\in \mathcal{B}_{2}\left( H\right) $ and 
\begin{equation}
\left\Vert AT\right\Vert _{2},\text{ }\left\Vert TA\right\Vert _{2}\leq
\left\Vert T\right\Vert \left\Vert A\right\Vert _{2}  \label{e.1.4.b}
\end{equation}%
for any $A\in \mathcal{B}_{2}\left( H\right) $ and $T\in \mathcal{B}\left(
H\right) ;$

(iii) $\mathcal{B}_{2}\left( H\right) $ is an operator ideal in $\mathcal{B}%
\left( H\right) ,$ i.e. 
\begin{equation*}
\mathcal{B}\left( H\right) \mathcal{B}_{2}\left( H\right) \mathcal{B}\left(
H\right) \subseteq \mathcal{B}_{2}\left( H\right) ;
\end{equation*}

(iv) $\mathcal{B}_{fin}\left( H\right) ,$ the space of operators of finite
rank, is a dense subspace of $\mathcal{B}_{2}\left( H\right) ;$

(v) $\mathcal{B}_{2}\left( H\right) \subseteq \mathcal{K}\left( H\right) ,$
where $\mathcal{K}\left( H\right) $ denotes the algebra of compact operators
on $H.$
\end{theorem}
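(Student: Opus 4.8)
The plan is to prove the five parts in an order that lets the elementary estimates feed the harder completeness argument, so I would postpone the substantive content of (i) and first secure (ii). I would begin by checking that the series defining $\left\langle A,B\right\rangle _{2}$ in (\ref{e.1.4.1}) converges absolutely: by the Cauchy--Schwarz inequality in $\ell ^{2}\left( I\right) $,
\begin{equation*}
\sum_{i\in I}\left\vert \left\langle Ae_{i},Be_{i}\right\rangle \right\vert \leq \sum_{i\in I}\left\Vert Ae_{i}\right\Vert \left\Vert Be_{i}\right\Vert \leq \left\Vert A\right\Vert _{2}\left\Vert B\right\Vert _{2}<\infty .
\end{equation*}
Thus $\left\langle \cdot ,\cdot \right\rangle _{2}$ is a well-defined sesquilinear form, and it is positive definite since $\left\langle A,A\right\rangle _{2}=\left\Vert A\right\Vert _{2}^{2}$ vanishes only when every $Ae_{i}=0$, i.e.\ when $A=0$; hence it is a genuine inner product inducing $\left\Vert \cdot \right\Vert _{2}$. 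Its basis-independence then follows at once from the polarization identity together with the basis-independence of $\left\Vert \cdot \right\Vert _{2}$ already recorded in (\ref{e.1.2.a}).

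I would then dispatch (ii), as its inequalities drive (iii)--(v). For the bound (\ref{e.1.4.a}), given a unit vector $x$ I complete it to an orthonormal basis and read off $\left\Vert Ax\right\Vert ^{2}\leq \sum_{i}\left\Vert Ae_{i}\right\Vert ^{2}=\left\Vert A\right\Vert _{2}^{2}$; taking the supremum over unit vectors gives $\left\Vert A\right\Vert \leq \left\Vert A\right\Vert _{2}$. For (\ref{e.1.4.b}), the estimate $\left\Vert TA\right\Vert _{2}^{2}=\sum_{i}\left\Vert TAe_{i}\right\Vert ^{2}\leq \left\Vert T\right\Vert ^{2}\sum_{i}\left\Vert Ae_{i}\right\Vert ^{2}=\left\Vert T\right\Vert ^{2}\left\Vert A\right\Vert _{2}^{2}$ is direct, and the companion bound for $\left\Vert AT\right\Vert _{2}$ follows by passing to adjoints, using $\left\Vert AT\right\Vert _{2}=\left\Vert \left( AT\right) ^{\ast }\right\Vert _{2}=\left\Vert T^{\ast }A^{\ast }\right\Vert _{2}\leq \left\Vert T^{\ast }\right\Vert \left\Vert A^{\ast }\right\Vert _{2}=\left\Vert T\right\Vert \left\Vert A\right\Vert _{2}$, together with the facts (recorded before the theorem) that $\left\Vert \cdot \right\Vert _{2}$ is adjoint-invariant and $\left\Vert T^{\ast }\right\Vert =\left\Vert T\right\Vert $. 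Part (iii) is then an immediate corollary: applying (ii) twice shows $SAT\in \mathcal{B}_{2}\left( H\right) $ for $S,T\in \mathcal{B}\left( H\right) $ and $A\in \mathcal{B}_{2}\left( H\right) $.

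The main obstacle is the completeness asserted in (i). Given a $\left\Vert \cdot \right\Vert _{2}$-Cauchy sequence $\left( A_{n}\right) $, the inequality $\left\Vert A_{n}-A_{m}\right\Vert \leq \left\Vert A_{n}-A_{m}\right\Vert _{2}$ from (\ref{e.1.4.a}) shows it is Cauchy in operator norm, hence converges in $\mathcal{B}\left( H\right) $ to some $A$. The delicate step is verifying $A\in \mathcal{B}_{2}\left( H\right) $ and $\left\Vert A_{n}-A\right\Vert _{2}\rightarrow 0$, which I would do by working with finite partial sums. Fixing $\varepsilon >0$ and $N$ with $\left\Vert A_{n}-A_{m}\right\Vert _{2}<\varepsilon $ for $n,m\geq N$, then for any finite $F\subseteq I$ and $n,m\geq N$ one has $\sum_{i\in F}\left\Vert \left( A_{n}-A_{m}\right) e_{i}\right\Vert ^{2}<\varepsilon ^{2}$; letting $m\rightarrow \infty $ passes the operator-norm limit $A_{m}e_{i}\rightarrow Ae_{i}$ through the \emph{finite} sum to give $\sum_{i\in F}\left\Vert \left( A_{n}-A\right) e_{i}\right\Vert ^{2}\leq \varepsilon ^{2}$, and taking the supremum over $F$ yields $\left\Vert A_{n}-A\right\Vert _{2}\leq \varepsilon $. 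This simultaneously shows $A_{n}-A\in \mathcal{B}_{2}\left( H\right) $, hence $A\in \mathcal{B}_{2}\left( H\right) $, and the desired convergence. This is the one place where interchanging a limit with an infinite sum must be handled carefully, via the finite-$F$ truncation.

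Finally, (iv) and (v) follow the same template. For density, letting $P_{F}$ be the orthogonal projection onto $\mathrm{span}\left\{ e_{i}:i\in F\right\} $, the operator $AP_{F}$ has finite rank and $\left\Vert A-AP_{F}\right\Vert _{2}^{2}=\sum_{i\notin F}\left\Vert Ae_{i}\right\Vert ^{2}$ is the tail of the convergent series $\sum_{i}\left\Vert Ae_{i}\right\Vert ^{2}$, so $AP_{F}\rightarrow A$ in $\left\Vert \cdot \right\Vert _{2}$ as $F$ exhausts $I$. Combining this with $\left\Vert \cdot \right\Vert \leq \left\Vert \cdot \right\Vert _{2}$ exhibits every $A\in \mathcal{B}_{2}\left( H\right) $ as an operator-norm limit of finite-rank operators; since $\mathcal{K}\left( H\right) $ is operator-norm closed and contains $\mathcal{B}_{fin}\left( H\right) $, it follows that $A\in \mathcal{K}\left( H\right) $, proving (v).
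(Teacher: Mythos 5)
Your proposal is correct: the Cauchy--Schwarz estimate for the inner product, the finite-truncation argument for completeness, the adjoint trick for $\left\Vert AT\right\Vert _{2}$, and the projection argument $AP_{F}\rightarrow A$ for density and compactness are all the standard textbook proofs and each step is sound. Note, however, that the paper itself offers no proof of this theorem --- it is recalled as a collection of well-known facts about Hilbert--Schmidt operators (with a reference to the literature on trace ideals) --- so there is no in-paper argument to compare against; your write-up simply supplies the standard proofs that the paper omits.
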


If $\left\{ e_{i}\right\} _{i\in I}$ an orthonormal basis of $H,$ we say
that $A\in \mathcal{B}\left( H\right) $ is \textit{trace class} if 
\begin{equation}
\left\Vert A\right\Vert _{1}:=\sum_{i\in I}\left\langle \left\vert
A\right\vert e_{i},e_{i}\right\rangle <\infty .  \label{e.1.5.1}
\end{equation}%
The definition of $\left\Vert A\right\Vert _{1}$ does not depend on the
choice of the orthonormal basis $\left\{ e_{i}\right\} _{i\in I}.$ We denote
by $\mathcal{B}_{1}\left( H\right) $ the set of trace class operators in $%
\mathcal{B}\left( H\right) .$

The following proposition holds:

\begin{proposition}
\label{p.1.1}If $A\in \mathcal{B}\left( H\right) ,$ then the following are
equivalent:

(i) $A\in \mathcal{B}_{1}\left( H\right) ;$

(ii) $\left\vert A\right\vert ^{1/2}\in \mathcal{B}_{2}\left( H\right) ;$

(ii) $A$ (or $\left\vert A\right\vert )$ is the product of two elements of $%
\mathcal{B}_{2}\left( H\right) .$
\end{proposition}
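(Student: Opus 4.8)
The plan is to establish the cycle of implications (i) $\Rightarrow$ (ii) $\Rightarrow$ (iii) $\Rightarrow$ (i), with essentially all the analytic content concentrated in a single norm identity together with one Cauchy--Schwarz estimate. First I would note that, since $\left\vert A\right\vert \geq 0$, its positive square root $\left\vert A\right\vert ^{1/2}$ is selfadjoint and satisfies $\left( \left\vert A\right\vert ^{1/2}\right) ^{2}=\left\vert A\right\vert $. Fixing an orthonormal basis $\left\{ e_{i}\right\} _{i\in I}$ of $H$, for each $i$ one then has $\left\langle \left\vert A\right\vert e_{i},e_{i}\right\rangle =\left\langle \left\vert A\right\vert ^{1/2}e_{i},\left\vert A\right\vert ^{1/2}e_{i}\right\rangle =\left\Vert \left\vert A\right\vert ^{1/2}e_{i}\right\Vert ^{2}$. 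Summing over $i$ and comparing with the definitions (\ref{e.1.5.1}) and (\ref{e.1.3.1}) yields the key identity $\left\Vert A\right\Vert _{1}=\left\Vert \left\vert A\right\vert ^{1/2}\right\Vert _{2}^{2}$, so the left side is finite exactly when the right side is; this is precisely (i) $\Leftrightarrow$ (ii).

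For (ii) $\Rightarrow$ (iii) I would merely exhibit the factorisations. The modulus factors as $\left\vert A\right\vert =\left\vert A\right\vert ^{1/2}\cdot \left\vert A\right\vert ^{1/2}$, a product of two Hilbert--Schmidt operators by hypothesis. For $A$ itself I would use the polar decomposition $A=U\left\vert A\right\vert $, where $U$ is a partial isometry, to write $A=\left( U\left\vert A\right\vert ^{1/2}\right) \left\vert A\right\vert ^{1/2}$; since $U$ is bounded and $\left\vert A\right\vert ^{1/2}\in \mathcal{B}_{2}\left( H\right) $, the ideal inequality (\ref{e.1.4.b}) shows $U\left\vert A\right\vert ^{1/2}\in \mathcal{B}_{2}\left( H\right) $, so $A$ is again a product of two Hilbert--Schmidt operators.

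The delicate step, and the one I expect to be the main obstacle, is (iii) $\Rightarrow$ (i), where finiteness of the trace norm must be recovered from a factorisation. Assume first $\left\vert A\right\vert =BC$ with $B,C\in \mathcal{B}_{2}\left( H\right) $. Then $\left\langle \left\vert A\right\vert e_{i},e_{i}\right\rangle =\left\langle Ce_{i},B^{\ast }e_{i}\right\rangle $, and since the left side is nonnegative, applying the Cauchy--Schwarz inequality in $H$ and then in $\ell ^{2}\left( I\right) $ gives $\left\Vert A\right\Vert _{1}=\sum_{i\in I}\left\langle \left\vert A\right\vert e_{i},e_{i}\right\rangle \leq \sum_{i\in I}\left\Vert Ce_{i}\right\Vert \left\Vert B^{\ast }e_{i}\right\Vert \leq \left\Vert C\right\Vert _{2}\left\Vert B^{\ast }\right\Vert _{2}=\left\Vert C\right\Vert _{2}\left\Vert B\right\Vert _{2}<\infty $, using $\left\Vert B^{\ast }\right\Vert _{2}=\left\Vert B\right\Vert _{2}$. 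Hence $A\in \mathcal{B}_{1}\left( H\right) $. If instead the hypothesis is given for $A$, say $A=BC$, I would first reduce to the previous situation via $\left\vert A\right\vert =U^{\ast }A=\left( U^{\ast }B\right) C$, again invoking (\ref{e.1.4.b}) to keep the first factor in $\mathcal{B}_{2}\left( H\right) $.

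The only point requiring input from outside the excerpt is the polar decomposition $A=U\left\vert A\right\vert $, which is not recalled above; I would cite it as a standard fact, observing that it serves only to transfer statements between $A$ and $\left\vert A\right\vert $, while the genuine analysis resides entirely in the norm identity of the first step and the Cauchy--Schwarz bound of the third. A minor bookkeeping caveat is that the statement labels both the square-root condition and the product condition as ``(ii)''; in writing the proof I would treat the latter as item (iii).
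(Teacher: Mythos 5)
Your argument is correct and complete: the identity $\left\Vert A\right\Vert _{1}=\left\Vert \left\vert A\right\vert ^{1/2}\right\Vert _{2}^{2}$ settles (i) $\Leftrightarrow$ (ii), the polar decomposition together with the ideal property (\ref{e.1.4.b}) gives the factorisations, and the Cauchy--Schwarz estimate $\sum_{i\in I}\left\langle \left\vert A\right\vert e_{i},e_{i}\right\rangle \leq \left\Vert B\right\Vert _{2}\left\Vert C\right\Vert _{2}$ closes the cycle. The paper itself states this proposition without proof, as standard background on trace class operators, so there is nothing to compare against; what you have written is the standard textbook argument and would serve as a correct proof.
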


The following properties are also well known:

\begin{theorem}
\label{t.1.2}With the above notations:

(i) We have 
\begin{equation}
\left\Vert A\right\Vert _{1}=\left\Vert A^{\ast }\right\Vert _{1}\text{ and }%
\left\Vert A\right\Vert _{2}\leq \left\Vert A\right\Vert _{1}
\label{e.1.6.1}
\end{equation}%
for any $A\in \mathcal{B}_{1}\left( H\right) ;$

(ii) $\mathcal{B}_{1}\left( H\right) $ is an operator ideal in $\mathcal{B}%
\left( H\right) ,$ i.e. 
\begin{equation*}
\mathcal{B}\left( H\right) \mathcal{B}_{1}\left( H\right) \mathcal{B}\left(
H\right) \subseteq \mathcal{B}_{1}\left( H\right) ;
\end{equation*}

(iii) We have%
\begin{equation*}
\mathcal{B}_{2}\left( H\right) \mathcal{B}_{2}\left( H\right) =\mathcal{B}%
_{1}\left( H\right) ;
\end{equation*}

(iv) We have%
\begin{equation*}
\left\Vert A\right\Vert _{1}=\sup \left\{ \left\langle A,B\right\rangle _{2}%
\text{ }|\text{ }B\in \mathcal{B}_{2}\left( H\right) ,\text{ }\left\Vert
B\right\Vert \leq 1\right\} ;
\end{equation*}

(v) $\left( \mathcal{B}_{1}\left( H\right) ,\left\Vert \cdot \right\Vert
_{1}\right) $ is a Banach space.

(iv) We have the following isometric isomorphisms%
\begin{equation*}
\mathcal{B}_{1}\left( H\right) \cong K\left( H\right) ^{\ast }\text{ and }%
\mathcal{B}_{1}\left( H\right) ^{\ast }\cong \mathcal{B}\left( H\right) ,
\end{equation*}%
where $K\left( H\right) ^{\ast }$ is the dual space of $K\left( H\right) $
and $\mathcal{B}_{1}\left( H\right) ^{\ast }$ is the dual space of $\mathcal{%
B}_{1}\left( H\right) .$
\end{theorem}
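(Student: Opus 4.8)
The plan is to obtain all the listed properties from three tools already available: the polar decomposition $A=U\left\vert A\right\vert $ with $U$ a partial isometry, the Schmidt (singular value) expansion of a compact operator, and the Hilbert--Schmidt calculus recorded in Theorem \ref{t.1.1} together with Proposition \ref{p.1.1}. I would prove the factorisation identity (iii) first, since (i), (ii) and (iv) all rest on it. The inclusion $\mathcal{B}_{2}\left( H\right) \mathcal{B}_{2}\left( H\right) \subseteq \mathcal{B}_{1}\left( H\right) $ is exactly the implication of Proposition \ref{p.1.1} that a product of two Hilbert--Schmidt operators is trace class; conversely, for $A\in \mathcal{B}_{1}\left( H\right) $ the splitting $A=\left( U\left\vert A\right\vert ^{1/2}\right) \left\vert A\right\vert ^{1/2}$ exhibits $A$ as such a product, because $\left\vert A\right\vert ^{1/2}\in \mathcal{B}_{2}\left( H\right) $ by Proposition \ref{p.1.1}(ii) and $\mathcal{B}_{2}\left( H\right) $ is an ideal by Theorem \ref{t.1.1}(iii). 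This gives equality in (iii).

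For (i), writing the eigenvalues of $\left\vert A\right\vert $ as $\lambda _{j}\geq 0$ one has $\left\Vert A\right\Vert _{2}^{2}=\sum_{j}\lambda _{j}^{2}$ and $\left\Vert A\right\Vert _{1}=\sum_{j}\lambda _{j}$, and the elementary inequality $\sum_{j}\lambda _{j}^{2}\leq \left( \sum_{j}\lambda _{j}\right) ^{2}$ yields $\left\Vert A\right\Vert _{2}\leq \left\Vert A\right\Vert _{1}$. The identity $\left\Vert A\right\Vert _{1}=\left\Vert A^{\ast }\right\Vert _{1}$ follows because $A$ and $A^{\ast }$ have the same singular values: from $A=U\left\vert A\right\vert $ one checks $\left\vert A^{\ast }\right\vert =U\left\vert A\right\vert U^{\ast }$, so the two moduli are unitarily equivalent on the closure of the range of $A$ and their eigenvalue lists, hence their traces, coincide. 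Property (ii) is then immediate: given $T\in \mathcal{B}\left( H\right) $ and a factorisation $A=BC$ with $B,C\in \mathcal{B}_{2}\left( H\right) $ supplied by (iii), we have $TA=\left( TB\right) C$ and $AT=B\left( CT\right) $, and since $\mathcal{B}_{2}\left( H\right) $ is an ideal both $TB$ and $CT$ remain Hilbert--Schmidt, whence $TA,AT\in \mathcal{B}_{2}\left( H\right) \mathcal{B}_{2}\left( H\right) =\mathcal{B}_{1}\left( H\right) $.

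For the variational formula (iv), testing the inner product against $B=U^{\ast }$ (truncated to finite rank so that $\left\Vert B\right\Vert \leq 1$) drives $\left\langle A,B\right\rangle _{2}$ up to $\sum_{j}\lambda _{j}=\left\Vert A\right\Vert _{1}$, while the reverse bound $\left\vert \left\langle A,B\right\rangle _{2}\right\vert \leq \left\Vert A\right\Vert _{1}\left\Vert B\right\Vert $ comes from a H\"{o}lder estimate on the Schmidt coefficients; completeness in (v) is then routine once $\left\Vert \cdot \right\Vert _{1}$ is realised as a supremum of continuous linear functionals. The hard part is the final pair of isometric isomorphisms. Here I would define the pairing maps $B\mapsto \left( \cdot \mapsto \mathrm{tr}\left( B\,\cdot \right) \right) $ from $\mathcal{B}_{1}\left( H\right) $ into $\mathcal{K}\left( H\right) ^{\ast }$ and from $\mathcal{B}\left( H\right) $ into $\mathcal{B}_{1}\left( H\right) ^{\ast }$, verify that they are isometric using (iv), and then prove surjectivity, i.e. that every bounded functional on $\mathcal{K}\left( H\right) $ (respectively on $\mathcal{B}_{1}\left( H\right) $) is represented by the trace against a trace class (respectively bounded) operator. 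The surjectivity step is the genuine obstacle, and since these identifications are classical I would in the paper simply refer to the standard references rather than reproduce that argument in full.
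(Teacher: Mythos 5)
The paper offers no proof of Theorem \ref{t.1.2} at all: it is introduced with the phrase ``the following properties are also well known'' and stated as background, with the reader implicitly sent to the standard literature (e.g.\ \cite{Si}). So there is nothing in the paper to compare your argument against; what you have written is the canonical textbook derivation, and as an outline it is essentially correct. Your ordering is sensible: proving (iii) first via the splitting $A=\left( U\left\vert A\right\vert ^{1/2}\right) \left\vert A\right\vert ^{1/2}$ (which is really just Proposition \ref{p.1.1} plus the ideal property of $\mathcal{B}_{2}\left( H\right) $ from Theorem \ref{t.1.1}(iii)) does make (ii) a one-line consequence, and the singular-value computations for (i) are correct, including the identity $\left\vert A^{\ast }\right\vert =U\left\vert A\right\vert U^{\ast }$.

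Two small points to tighten. First, in (iv) the paper's convention is $\left\langle A,B\right\rangle _{2}=\limfunc{tr}\left( B^{\ast }A\right) $, so the extremizing test operators are the finite-rank truncations $B=UP_{n}$ of the partial isometry $U$ itself (giving $\limfunc{tr}\left( P_{n}U^{\ast }U\left\vert A\right\vert \right) =\sum_{j\leq n}\lambda _{j}$), not of $U^{\ast }$; as written your choice computes $\limfunc{tr}\left( UA\right) $ rather than $\limfunc{tr}\left( \left\vert A\right\vert \right) $. Second, completeness in (v) is not quite ``routine once $\left\Vert \cdot \right\Vert _{1}$ is a supremum of functionals'': one still needs to produce a candidate limit (via $\left\Vert A\right\Vert \leq \left\Vert A\right\Vert _{2}\leq \left\Vert A\right\Vert _{1}$ and operator-norm convergence) and then a Fatou-type or duality argument to show the limit is trace class and that convergence holds in $\left\Vert \cdot \right\Vert _{1}$. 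Deferring the surjectivity halves of the duality isomorphisms to the references is exactly what the paper itself does, so that is unobjectionable.
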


We define the \textit{trace} of a trace class operator $A\in \mathcal{B}%
_{1}\left( H\right) $ to be%
\begin{equation}
\limfunc{tr}\left( A\right) :=\sum_{i\in I}\left\langle
Ae_{i},e_{i}\right\rangle ,  \label{e.1.7.1}
\end{equation}%
where $\left\{ e_{i}\right\} _{i\in I}$ an orthonormal basis of $H.$ Note
that this coincides with the usual definition of the trace if $H$ is
finite-dimensional. We observe that the series (\ref{e.1.7.1}) converges
absolutely and it is independent from the choice of basis.

The following result collects some properties of the trace:

\begin{theorem}
\label{t.3.1.a}We have:

(i) If $A\in \mathcal{B}_{1}\left( H\right) $ then $A^{\ast }\in \mathcal{B}%
_{1}\left( H\right) $ and 
\begin{equation}
\limfunc{tr}\left( A^{\ast }\right) =\overline{\limfunc{tr}\left( A\right) };
\label{e.1.8.1}
\end{equation}

(ii) If $A\in \mathcal{B}_{1}\left( H\right) $ and $T\in \mathcal{B}\left(
H\right) ,$ then $AT,$ $TA\in \mathcal{B}_{1}\left( H\right) $ and%
\begin{equation}
\limfunc{tr}\left( AT\right) =\limfunc{tr}\left( TA\right) \text{ and }%
\left\vert \limfunc{tr}\left( AT\right) \right\vert \leq \left\Vert
A\right\Vert _{1}\left\Vert T\right\Vert ;  \label{e.1.9.1}
\end{equation}

(iii) $\limfunc{tr}\left( \cdot \right) $ is a bounded linear functional on $%
\mathcal{B}_{1}\left( H\right) $ with $\left\Vert \limfunc{tr}\right\Vert
=1; $

(iv) If $A,$ $B\in \mathcal{B}_{2}\left( H\right) $ then $AB,$ $BA\in 
\mathcal{B}_{1}\left( H\right) $ and $\limfunc{tr}\left( AB\right) =\limfunc{%
tr}\left( BA\right) ;$

(v) $\mathcal{B}_{fin}\left( H\right) $ is a dense subspace of $\mathcal{B}%
_{1}\left( H\right) .$
\end{theorem}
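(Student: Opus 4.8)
The plan is to prove the cyclicity relation in part (iv) first, because the trace identities in (ii) and the norm statement in (iii) will all be bootstrapped from it. Let $A,B\in\mathcal{B}_2(H)$; then $AB,BA\in\mathcal{B}_1(H)$ already by Theorem \ref{t.1.2}(iii). Expanding the trace against an orthonormal basis $\{e_i\}_{i\in I}$ and inserting a resolution of the identity gives
\[
\limfunc{tr}(AB)=\sum_{i\in I}\langle ABe_i,e_i\rangle=\sum_{i,j}\langle Be_i,e_j\rangle\langle Ae_j,e_i\rangle,
\]
and in the same way $\limfunc{tr}(BA)=\sum_{j,i}\langle Ae_j,e_i\rangle\langle Be_i,e_j\rangle$. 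These two iterated sums have the same terms, so they agree once the order of summation may be reversed, and this is precisely where the Hilbert--Schmidt hypothesis enters: by the Cauchy--Schwarz inequality in $\ell^2(I\times I)$,
\[
\sum_{i,j}|\langle Ae_j,e_i\rangle|\,|\langle Be_i,e_j\rangle|\leq\|A\|_2\|B\|_2<\infty,
\]
so the double family is absolutely summable and Fubini's theorem applies. I expect this interchange of summation to be the main technical point of the whole theorem; every other part is a formal consequence of it.

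Next I would establish (ii). Membership $AT,TA\in\mathcal{B}_1(H)$ is immediate from the ideal property in Theorem \ref{t.1.2}(ii). For the trace identity, use Proposition \ref{p.1.1} to factor $A=BC$ with $B,C\in\mathcal{B}_2(H)$; since $\mathcal{B}_2(H)$ is an ideal (Theorem \ref{t.1.1}(ii)), both $CT$ and $TB$ lie in $\mathcal{B}_2(H)$, and applying (iv) twice yields
\[
\limfunc{tr}(AT)=\limfunc{tr}(B(CT))=\limfunc{tr}((CT)B)=\limfunc{tr}(C(TB))=\limfunc{tr}((TB)C)=\limfunc{tr}(TA).
\]
For the bound I would use the polar decomposition $A=U|A|$ and the splitting $|A|=|A|^{1/2}|A|^{1/2}$, so that $A=(U|A|^{1/2})(|A|^{1/2})$ with $|A|^{1/2}\in\mathcal{B}_2(H)$ (Proposition \ref{p.1.1}) and hence $U|A|^{1/2}\in\mathcal{B}_2(H)$ by the ideal property of Theorem \ref{t.1.1}(ii). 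Then $\limfunc{tr}(AT)=\langle|A|^{1/2}T,(U|A|^{1/2})^\ast\rangle_2$, and the Cauchy--Schwarz inequality for $\langle\cdot,\cdot\rangle_2$ together with (\ref{e.1.4.b}) gives
\[
|\limfunc{tr}(AT)|\leq\|U|A|^{1/2}\|_2\,\||A|^{1/2}T\|_2\leq\||A|^{1/2}\|_2^2\,\|T\|=\|A\|_1\|T\|,
\]
the final equality being the identity $\||A|^{1/2}\|_2^2=\sum_{i}\langle|A|e_i,e_i\rangle=\|A\|_1$.

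The remaining parts are short. In (i), $A^\ast\in\mathcal{B}_1(H)$ and $\|A^\ast\|_1=\|A\|_1$ are recorded in Theorem \ref{t.1.2}(i), while the conjugation rule follows from the absolute convergence of (\ref{e.1.7.1}):
\[
\limfunc{tr}(A^\ast)=\sum_{i}\langle A^\ast e_i,e_i\rangle=\sum_{i}\overline{\langle Ae_i,e_i\rangle}=\overline{\limfunc{tr}(A)}.
\]
For (iii), linearity is clear from the definition, boundedness with $\|\limfunc{tr}\|\leq1$ is the special case $T=I$ of the inequality just proved in (ii), and sharpness is obtained by testing on a rank-one orthogonal projection $P=\langle\cdot,e\rangle e$ with $\|e\|=1$, for which $\|P\|_1=\limfunc{tr}(P)=1$; hence $\|\limfunc{tr}\|=1$. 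Finally, for (v) I would take the spectral decomposition of the compact positive operator $|A|=\sum_{n}\lambda_n\langle\cdot,\phi_n\rangle\phi_n$ (compactness from Theorem \ref{t.1.1}(v)), truncate it to a finite-rank operator $|A|_N=\sum_{n\leq N}\lambda_n\langle\cdot,\phi_n\rangle\phi_n$, and set $A_N:=U|A|_N\in\mathcal{B}_{fin}(H)$; using the ideal bound $\|U(|A|-|A|_N)\|_1\leq\||A|-|A|_N\|_1$ one gets $\|A-A_N\|_1\leq\sum_{n>N}\lambda_n\to0$, which proves density.
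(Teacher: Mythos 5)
Your argument is correct, but there is nothing in the paper to compare it against: Theorem \ref{t.3.1.a} appears in the preliminaries as a list of well-known properties of the trace, stated without proof and implicitly referred to the standard literature (e.g.\ \cite{Si}). Your write-up is a sound reconstruction of the standard proofs, with the right logical order: absolute summability of the double family $\left\vert \left\langle Ae_{j},e_{i}\right\rangle \right\vert \left\vert \left\langle Be_{i},e_{j}\right\rangle \right\vert$ via Cauchy--Schwarz in $\ell ^{2}\left( I\times I\right) $ is indeed the one genuinely analytic step, and deducing (ii) from (iv) through the factorization $A=BC$ with $B,C\in \mathcal{B}_{2}\left( H\right) $, the bound $\left\vert \limfunc{tr}\left( AT\right) \right\vert \leq \left\Vert A\right\Vert _{1}\left\Vert T\right\Vert $ via polar decomposition and $A=\left( U\left\vert A\right\vert ^{1/2}\right) \left\vert A\right\vert ^{1/2}$, and (v) by truncating the spectral decomposition of $\left\vert A\right\vert $ are all standard and complete. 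Two cosmetic points: the ideal property of $\mathcal{B}_{2}\left( H\right) $ that puts $CT$ and $TB$ in $\mathcal{B}_{2}\left( H\right) $ is item (iii) of Theorem \ref{t.1.1}, not (ii) (though (ii) gives the quantitative version you actually use later); and the bound $\left\Vert U\left( \left\vert A\right\vert -\left\vert A\right\vert _{N}\right) \right\Vert _{1}\leq \left\Vert \left\vert A\right\vert -\left\vert A\right\vert _{N}\right\Vert _{1}$ uses the quantitative form $\left\Vert TX\right\Vert _{1}\leq \left\Vert T\right\Vert \left\Vert X\right\Vert _{1}$ of the ideal property, which the paper does not state explicitly but which follows, for instance, from Theorem \ref{t.1.2}(iv). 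Neither affects the validity of the proof.
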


Utilising the trace notation we obviously have that 
\begin{equation*}
\left\langle A,B\right\rangle _{2}=\limfunc{tr}\left( B^{\ast }A\right) =%
\limfunc{tr}\left( AB^{\ast }\right) \text{ and }\left\Vert A\right\Vert
_{2}^{2}=\limfunc{tr}\left( A^{\ast }A\right) =\limfunc{tr}\left( \left\vert
A\right\vert ^{2}\right)
\end{equation*}%
for any $A,$ $B\in \mathcal{B}_{2}\left( H\right) .$

The following H\"{o}lder's type inequality has been obtained by Ruskai in 
\cite{R} 
\begin{equation}
\left\vert \limfunc{tr}\left( AB\right) \right\vert \leq \limfunc{tr}\left(
\left\vert AB\right\vert \right) \leq \left[ \limfunc{tr}\left( \left\vert
A\right\vert ^{1/\alpha }\right) \right] ^{\alpha }\left[ \limfunc{tr}\left(
\left\vert B\right\vert ^{1/\left( 1-\alpha \right) }\right) \right]
^{1-\alpha }  \label{1.9.2}
\end{equation}%
where $\alpha \in \left( 0,1\right) $ and $A,$ $B\in \mathcal{B}\left(
H\right) $ with $\left\vert A\right\vert ^{1/\alpha },$ $\left\vert
B\right\vert ^{1/\left( 1-\alpha \right) }\in \mathcal{B}_{1}\left( H\right)
.$

In particular, for $\alpha =\frac{1}{2}$ we get the Schwarz inequality%
\begin{equation}
\left\vert \limfunc{tr}\left( AB\right) \right\vert \leq \limfunc{tr}\left(
\left\vert AB\right\vert \right) \leq \left[ \limfunc{tr}\left( \left\vert
A\right\vert ^{2}\right) \right] ^{1/2}\left[ \limfunc{tr}\left( \left\vert
B\right\vert ^{2}\right) \right] ^{1/2}  \label{1.9.3}
\end{equation}%
with $A,$ $B\in \mathcal{B}_{2}\left( H\right) .$

If $A\geq 0$ and $P\in \mathcal{B}_{1}\left( H\right) $ with $P\geq 0,$ then 
\begin{equation}
0\leq \limfunc{tr}\left( PA\right) \leq \left\Vert A\right\Vert \limfunc{tr}%
\left( P\right) .  \label{1.9.4}
\end{equation}

Indeed, since $A\geq 0,$ then $\left\langle Ax,x\right\rangle \geq 0$ for
any $x\in H.$ If $\left\{ e_{i}\right\} _{i\in I}$ an orthonormal basis of $%
H $, then%
\begin{equation*}
0\leq \left\langle AP^{1/2}e_{i},P^{1/2}e_{i}\right\rangle \leq \left\Vert
A\right\Vert \left\Vert P^{1/2}e_{i}\right\Vert ^{2}=\left\Vert A\right\Vert
\left\langle Pe_{i},e_{i}\right\rangle
\end{equation*}%
for any $i\in I.$ Summing over $i\in I$ we get%
\begin{equation*}
0\leq \sum_{i\in I}\left\langle AP^{1/2}e_{i},P^{1/2}e_{i}\right\rangle \leq
\left\Vert A\right\Vert \sum_{i\in I}\left\langle Pe_{i},e_{i}\right\rangle
=\left\Vert A\right\Vert \limfunc{tr}\left( P\right)
\end{equation*}%
and since%
\begin{equation*}
\sum_{i\in I}\left\langle AP^{1/2}e_{i},P^{1/2}e_{i}\right\rangle
=\sum_{i\in I}\left\langle P^{1/2}AP^{1/2}e_{i},e_{i}\right\rangle =\limfunc{%
tr}\left( P^{1/2}AP^{1/2}\right) =\limfunc{tr}\left( PA\right)
\end{equation*}%
we obtain the desired result (\ref{1.9.4}).

This obviously imply the fact that, if $A$ and $B$ are selfadjoint operators
with $A\leq B$ and $P\in \mathcal{B}_{1}\left( H\right) $ with $P\geq 0,$
then%
\begin{equation}
\limfunc{tr}\left( PA\right) \leq \limfunc{tr}\left( PB\right) .
\label{1.9.5}
\end{equation}

Now, if $A$ is a selfadjoint operator, then we know that%
\begin{equation*}
\left\vert \left\langle Ax,x\right\rangle \right\vert \leq \left\langle
\left\vert A\right\vert x,x\right\rangle \text{ for any }x\in H.
\end{equation*}%
This inequality follows by Jensen's inequality for the convex function $%
f\left( t\right) =\left\vert t\right\vert $ defined on a closed interval
containing the spectrum of $A.$

If $\left\{ e_{i}\right\} _{i\in I}$ is an orthonormal basis of $H$, then%
\begin{eqnarray}
\left\vert \limfunc{tr}\left( PA\right) \right\vert &=&\left\vert \sum_{i\in
I}\left\langle AP^{1/2}e_{i},P^{1/2}e_{i}\right\rangle \right\vert \leq
\sum_{i\in I}\left\vert \left\langle AP^{1/2}e_{i},P^{1/2}e_{i}\right\rangle
\right\vert  \label{1.9.6} \\
&\leq &\sum_{i\in I}\left\langle \left\vert A\right\vert
P^{1/2}e_{i},P^{1/2}e_{i}\right\rangle =\limfunc{tr}\left( P\left\vert
A\right\vert \right) ,  \notag
\end{eqnarray}%
for any $A$ a selfadjoint operator and $P\in \mathcal{B}_{1}^{+}\left(
H\right) :=\left\{ P\in \mathcal{B}_{1}\left( H\right) \text{ with }P\geq
0\right\} .$

For the theory of trace functionals and their applications the reader is
referred to \cite{Si}.

For some classical trace inequalities see \cite{Ch}, \cite{C}, \cite{N} and 
\cite{Y1}, which are continuations of the work of Bellman \cite{B}. For
related works the reader can refer to \cite{A}, \cite{BJL}, \cite{Ch}, \cite%
{FL}, \cite{Le}, \cite{Li}, \cite{Ma}, \cite{SA0} and \cite{UT}.

\subsection{Quantum $f$-Divergence for Trace Class Operators}

On complex Hilbert space $\left( \mathcal{B}_{2}\left( H\right)
,\left\langle \cdot ,\cdot \right\rangle _{2}\right) ,$ where the
Hilbert-Schmidt inner product is defined by%
\begin{equation*}
\left\langle U,V\right\rangle _{2}:=\limfunc{tr}\left( V^{\ast }U\right) ,%
\text{ }U,\text{ }V\in \mathcal{B}_{2}\left( H\right) ,
\end{equation*}%
for $A,$ $B\in \mathcal{B}^{+}\left( H\right) $ consider the operators $%
\mathfrak{L}_{A}:\mathcal{B}_{2}\left( H\right) \rightarrow \mathcal{B}%
_{2}\left( H\right) $ and $\mathfrak{R}_{B}:\mathcal{B}_{2}\left( H\right)
\rightarrow \mathcal{B}_{2}\left( H\right) $ defined by 
\begin{equation*}
\mathfrak{L}_{A}T:=AT\text{ and }\mathfrak{R}_{B}T:=TB.
\end{equation*}%
We observe that they are well defined and since%
\begin{equation*}
\left\langle \mathfrak{L}_{A}T,T\right\rangle _{2}=\left\langle
AT,T\right\rangle _{2}=\limfunc{tr}\left( T^{\ast }AT\right) =\limfunc{tr}%
\left( \left\vert T^{\ast }\right\vert ^{2}A\right) \geq 0
\end{equation*}%
and 
\begin{equation*}
\left\langle \mathfrak{R}_{B}T,T\right\rangle _{2}=\left\langle
TB,T\right\rangle _{2}=\limfunc{tr}\left( T^{\ast }TB\right) =\limfunc{tr}%
\left( \left\vert T\right\vert ^{2}B\right) \geq 0
\end{equation*}%
for any $T\in \mathcal{B}_{2}\left( H\right) ,$ they are also positive in
the operator order of $\mathcal{B}\left( \mathcal{B}_{2}\left( H\right)
\right) ,$ the Banach algebra of all bounded operators on $\mathcal{B}%
_{2}\left( H\right) $ with the norm $\left\Vert \cdot \right\Vert _{2}$
where $\left\Vert T\right\Vert _{2}=\limfunc{tr}\left( \left\vert
T\right\vert ^{2}\right) ,$ $T\in \mathcal{B}_{2}\left( H\right) .$

Since $\limfunc{tr}\left( \left\vert X^{\ast }\right\vert ^{2}\right) =%
\limfunc{tr}\left( \left\vert X\right\vert ^{2}\right) $ for any $X\in 
\mathcal{B}_{2}\left( H\right) ,$ then also 
\begin{align*}
\limfunc{tr}\left( T^{\ast }AT\right) & =\limfunc{tr}\left( T^{\ast
}A^{1/2}A^{1/2}T\right) =\limfunc{tr}\left( \left( A^{1/2}T\right) ^{\ast
}A^{1/2}T\right) \\
& =\limfunc{tr}\left( \left\vert A^{1/2}T\right\vert ^{2}\right) =\limfunc{tr%
}\left( \left\vert \left( A^{1/2}T\right) ^{\ast }\right\vert ^{2}\right) =%
\limfunc{tr}\left( \left\vert T^{\ast }A^{1/2}\right\vert ^{2}\right)
\end{align*}%
for $A\geq 0$ and $T\in \mathcal{B}_{2}\left( H\right) .$

We observe that $\mathfrak{L}_{A}$ and $\mathfrak{R}_{B}$ are commutative,
therefore the product $\mathfrak{L}_{A}\mathfrak{R}_{B}$ is a selfadjoint
positive operator in $\mathcal{B}\left( \mathcal{B}_{2}\left( H\right)
\right) $ for any positive operators $A,B\in \mathcal{B}\left( H\right) .$

For $A,B\in \mathcal{B}^{+}\left( H\right) $ with $B$ invertible, we define
the \textit{Araki transform} $\mathfrak{A}_{A,B}:\mathcal{B}_{2}\left(
H\right) \rightarrow \mathcal{B}_{2}\left( H\right) $ by $\mathfrak{A}%
_{A,B}:=\mathfrak{L}_{A}\mathfrak{R}_{B^{-1}}.$ We observe that for $T\in 
\mathcal{B}_{2}\left( H\right) $ we have $\mathfrak{A}_{A,B}T=ATB^{-1}$ and 
\begin{equation*}
\left\langle \mathfrak{A}_{A,B}T,T\right\rangle _{2}=\left\langle
ATB^{-1},T\right\rangle _{2}=\limfunc{tr}\left( T^{\ast }ATB^{-1}\right) .
\end{equation*}%
Observe also, by the properties of trace, that 
\begin{align*}
\limfunc{tr}\left( T^{\ast }ATB^{-1}\right) & =\limfunc{tr}\left(
B^{-1/2}T^{\ast }A^{1/2}A^{1/2}TB^{-1/2}\right) \\
& =\limfunc{tr}\left( \left( A^{1/2}TB^{-1/2}\right) ^{\ast }\left(
A^{1/2}TB^{-1/2}\right) \right) =\limfunc{tr}\left( \left\vert
A^{1/2}TB^{-1/2}\right\vert ^{2}\right)
\end{align*}%
giving that%
\begin{equation}
\left\langle \mathfrak{A}_{A,B}T,T\right\rangle _{2}=\limfunc{tr}\left(
\left\vert A^{1/2}TB^{-1/2}\right\vert ^{2}\right) \geq 0  \label{e.2.1}
\end{equation}%
for any $T\in \mathcal{B}_{2}\left( H\right) .$

We observe that, by the definition of operator order and by (\ref{e.2.1}) we
have $r1_{\mathcal{B}_{2}\left( H\right) }\leq \mathfrak{A}_{A,B}\leq R1_{%
\mathcal{B}_{2}\left( H\right) }$ for some $R\geq r\geq 0$ if and only if%
\begin{equation}
r\limfunc{tr}\left( \left\vert T\right\vert ^{2}\right) \leq \limfunc{tr}%
\left( \left\vert A^{1/2}TB^{-1/2}\right\vert ^{2}\right) \leq R\limfunc{tr}%
\left( \left\vert T\right\vert ^{2}\right)  \label{e.2.2}
\end{equation}%
for any $T\in \mathcal{B}_{2}\left( H\right) .$

We also notice that a sufficient condition for (\ref{e.2.2}) to hold is that
the following inequality in the operator order of $\mathcal{B}\left(
H\right) $ is satisfied 
\begin{equation}
r\left\vert T\right\vert ^{2}\leq \left\vert A^{1/2}TB^{-1/2}\right\vert
^{2}\leq R\left\vert T\right\vert ^{2}  \label{e.2.3}
\end{equation}%
for any $T\in \mathcal{B}_{2}\left( H\right) .$

Let $U$ be a selfadjoint linear operator on a complex Hilbert space $\left(
K;\left\langle \cdot ,\cdot \right\rangle \right) .$ The \textit{Gelfand map 
}establishes a $\ast $-isometrically isomorphism $\Phi $ between the set $%
C\left( \limfunc{Sp}\left( U\right) \right) $ of all \textit{continuous
functions} defined on the \textit{spectrum} of $U,$ denoted $\limfunc{Sp}%
\left( U\right) ,$ and the $C^{\ast }$-algebra $C^{\ast }\left( U\right) $
generated by $U$ and the identity operator $1_{K}$ on $K$ as follows:

For any $f,g\in C\left( \limfunc{Sp}\left( U\right) \right) $ and any $%
\alpha ,\beta \in \mathbb{C}$ we have

(i) \ \ \ $\Phi \left( \alpha f+\beta g\right) =\alpha \Phi \left( f\right)
+\beta \Phi \left( g\right) ;$

(ii) \ \ $\Phi \left( fg\right) =\Phi \left( f\right) \Phi \left( g\right) $
and $\Phi \left( \bar{f}\right) =\Phi \left( f\right) ^{\ast };$

(iii) \ $\left\Vert \Phi \left( f\right) \right\Vert =\left\Vert
f\right\Vert :=\sup_{t\in \limfunc{Sp}\left( U\right) }\left\vert f\left(
t\right) \right\vert ;$

(iv) \ \ $\Phi \left( f_{0}\right) =1_{K}$ and $\Phi \left( f_{1}\right) =U,$
where $f_{0}\left( t\right) =1$ and $f_{1}\left( t\right) =t,$ for $t\in 
\limfunc{Sp}\left( U\right) .$

With this notation we define 
\begin{equation*}
f\left( U\right) :=\Phi \left( f\right) \text{\quad for all }f\in C\left( 
\limfunc{Sp}\left( U\right) \right)
\end{equation*}%
and we call it the \textit{continuous functional calculus} for a selfadjoint
operator $U.$

If $U$ is a selfadjoint operator and $f$ is a real valued continuous
function on $\limfunc{Sp}\left( U\right) $, then $f\left( t\right) \geq 0$
for any $t\in \limfunc{Sp}\left( U\right) $ implies that $f\left( U\right)
\geq 0,$ i.e. $f\left( U\right) $ is a positive operator on $K.$ Moreover,
if both $f$ and $g$ are real valued functions on $\limfunc{Sp}\left(
U\right) $ then the following important property holds: 
\begin{equation}
f\left( t\right) \geq g\left( t\right) \text{\quad for any\quad }t\in 
\limfunc{Sp}\left( U\right) \text{\quad implies that\quad }f\left( U\right)
\geq g\left( U\right)  \tag{P}  \label{P}
\end{equation}%
in the operator order of $B\left( K\right) .$

Let $f:[0,\infty )\rightarrow \mathbb{R}$ be a continuous function.
Utilising the continuous functional calculus for the Araki selfadjoint
operator $\mathfrak{A}_{Q,P}\in \mathcal{B}\left( \mathcal{B}_{2}\left(
H\right) \right) $ we can define the \textit{quantum }$f$\textit{-divergence}
for $Q,P\in S_{1}\left( H\right) :=\left\{ P\in \mathcal{B}_{1}\left(
H\right) ,\text{ }P\geq 0\text{ with }\limfunc{tr}\left( P\right) =1\text{ }%
\right\} $ and $P$ invertible, by%
\begin{equation*}
S_{f}\left( Q,P\right) :=\left\langle f\left( \mathfrak{A}_{Q,P}\right)
P^{1/2},P^{1/2}\right\rangle _{2}=\limfunc{tr}\left( P^{1/2}f\left( 
\mathfrak{A}_{Q,P}\right) P^{1/2}\right) .
\end{equation*}

If we consider the continuous convex function $f:[0,\infty )\rightarrow 
\mathbb{R}$, with $f\left( 0\right) :=0$ and $f\left( t\right) =t\ln t$ for $%
t>0$ then for $Q,P\in S_{1}\left( H\right) $ and $Q,P$ invertible we have%
\begin{equation*}
S_{f}\left( Q,P\right) =\limfunc{tr}\left[ Q\left( \ln Q-\ln P\right) \right]
=:U\left( Q,P\right) ,
\end{equation*}%
which is the \textit{Umegaki relative entropy}.

If we take the continuous convex function $f:[0,\infty )\rightarrow \mathbb{R%
}$, $f\left( t\right) =\left\vert t-1\right\vert $ for $t\geq 0$ then for $%
Q,P\in S_{1}\left( H\right) $ with $P$ invertible we have%
\begin{equation*}
S_{f}\left( Q,P\right) =\limfunc{tr}\left( \left\vert Q-P\right\vert \right)
=:V\left( Q,P\right) ,
\end{equation*}%
where $V\left( Q,P\right) $ is the \textit{variational distance}.

If we take $f:[0,\infty )\rightarrow \mathbb{R}$, $f\left( t\right) =t^{2}-1$
for $t\geq 0$ then for $Q,P\in S_{1}\left( H\right) $ with $P$ invertible we
have%
\begin{equation*}
S_{f}\left( Q,P\right) =\limfunc{tr}\left( Q^{2}P^{-1}\right) -1=:\chi
^{2}\left( Q,P\right) ,
\end{equation*}%
which is called the $\chi ^{2}$\textit{-distance}

Let $q\in \left( 0,1\right) $ and define the convex function $%
f_{q}:[0,\infty )\rightarrow \mathbb{R}$ by $f_{q}\left( t\right) =\frac{%
1-t^{q}}{1-q}.$ Then 
\begin{equation*}
S_{f_{q}}\left( Q,P\right) =\frac{1-\limfunc{tr}\left( Q^{q}P^{1-q}\right) }{%
1-q},
\end{equation*}%
which is \textit{Tsallis relative entropy}.

If we consider the convex function $f:[0,\infty )\rightarrow \mathbb{R}$ by $%
f\left( t\right) =\frac{1}{2}\left( \sqrt{t}-1\right) ^{2},$ then 
\begin{equation*}
S_{f}\left( Q,P\right) =1-\limfunc{tr}\left( Q^{1/2}P^{1/2}\right)
=:h^{2}\left( Q,P\right) ,
\end{equation*}%
which is known as \textit{Hellinger discrimination}.

If we take $f:\left( 0,\infty \right) \rightarrow \mathbb{R}$, $f\left(
t\right) =-\ln t$ then for $Q,P\in S_{1}\left( H\right) $ and $Q,P$
invertible we have%
\begin{equation*}
S_{f}\left( Q,P\right) =\limfunc{tr}\left[ P\left( \ln P-\ln Q\right) \right]
=U\left( P,Q\right) .
\end{equation*}%
The reader can obtain other particular quantum $f$-divergence measures by
utilizing the normalized convex functions from Introduction, namely the
convex functions defining the dichotomy class, Matsushita's divergences,
Puri-Vincze divergences or divergences of Arimoto-type. We omit the details.

In the important case of finite dimensional space $H$ and the generalized
inverse $P^{-1},$ numerous properties of the quantum $f$-divergence, mostly
in the case when $f$ is \textit{operator convex,} have been obtained in the
recent papers \cite{HP}, \cite{HMP}, \cite{P1}, \cite{P2}\ and the
references therein.

In what follows we obtain several inequalities for the larger class of
convex functions on an interval.

\section{Inequalities for $f$ Convex and Normalized}

Suppose that $I$ is an interval of real numbers with interior $\mathring{I}$
and $f:I\rightarrow \mathbb{R}$ is a convex function on $I$. Then $f$ is
continuous on $\mathring{I}$ and has finite left and right derivatives at
each point of $\mathring{I}$. Moreover, if $x,y\in \mathring{I}$ and $x<y,$
then $f_{-}^{\prime }\left( x\right) \leq f_{+}^{\prime }\left( x\right)
\leq f_{-}^{\prime }\left( y\right) \leq f_{+}^{\prime }\left( y\right) ,$
which shows that both $f_{-}^{\prime }$ and $f_{+}^{\prime }$ are
nondecreasing function on $\mathring{I}$. It is also known that a convex
function must be differentiable except for at most countably many points.

For a convex function $f:I\rightarrow \mathbb{R}$, the subdifferential of $f$
denoted by $\partial f$ is the set of all functions $\varphi :I\rightarrow %
\left[ -\infty ,\infty \right] $ such that $\varphi \left( \mathring{I}%
\right) \subset \mathbb{R}$ and 
\begin{equation}
f\left( x\right) \geq f\left( a\right) +\left( x-a\right) \varphi \left(
a\right) \text{ for any }x,a\in I.  \tag{G}  \label{G}
\end{equation}

It is also well known that if $f$ is convex on $I,$ then $\partial f$ is
nonempty, $f_{-}^{\prime }$, $f_{+}^{\prime }\in \partial f$ and if $\varphi
\in \partial f$, then 
\begin{equation*}
f_{-}^{\prime }\left( x\right) \leq \varphi \left( x\right) \leq
f_{+}^{\prime }\left( x\right) \text{ for any }x\in \text{$\mathring{I}$.}
\end{equation*}%
In particular, $\varphi $ is a nondecreasing function.

If $f$ is differentiable and convex on $\mathring{I}$, then $\partial
f=\left\{ f^{\prime }\right\} .$

We are able now to state and prove the first result concerning the quantum $f
$-divergence for the general case of convex functions. 

\begin{theorem}
\label{t.2.1}Let $f:[0,\infty )\rightarrow \mathbb{R}$ be a continuous
convex function that is normalized, i.e. $f\left( 1\right) =0.$ Then for any 
$Q,P\in S_{1}\left( H\right) ,$ with $P$ invertible, we have%
\begin{equation}
0\leq S_{f}\left( Q,P\right) .  \label{e.2.4}
\end{equation}%
Moreover, if $f$ is continuously differentiable, then also%
\begin{equation}
S_{f}\left( Q,P\right) \leq S_{\ell f^{\prime }}\left( Q,P\right)
-S_{f^{\prime }}\left( Q,P\right) ,  \label{e.2.5}
\end{equation}%
where the function $\ell $ is defined as $\ell \left( t\right) =t,$ $t\in 
\mathbb{R}$.
\end{theorem}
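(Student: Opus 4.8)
The plan is to reduce both inequalities to elementary scalar convexity estimates evaluated pointwise on the spectrum of the Araki operator $\mathfrak{A}_{Q,P}$, and then to lift these to operator inequalities via the order-preserving property (\ref{P}) of the continuous functional calculus. Two preliminary observations do most of the work. First, $\mathfrak{A}_{Q,P}$ is selfadjoint and positive in $\mathcal{B}\left( \mathcal{B}_{2}\left( H\right) \right) $, so its spectrum lies in $[0,\infty )$, precisely where $f$ is defined. Second, evaluating the quadratic form $T\mapsto \left\langle TP^{1/2},P^{1/2}\right\rangle _{2}$ on the identity and on $\mathfrak{A}_{Q,P}$ produces the correct normalisations: since $\mathfrak{A}_{Q,P}P^{1/2}=QP^{1/2}P^{-1}=QP^{-1/2}$, cyclicity of the trace gives $\left\langle \mathfrak{A}_{Q,P}P^{1/2},P^{1/2}\right\rangle _{2}=\limfunc{tr}\left( P^{1/2}QP^{-1/2}\right) =\limfunc{tr}\left( Q\right) =1$, while $\left\langle P^{1/2},P^{1/2}\right\rangle _{2}=\limfunc{tr}\left( P\right) =1$.

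For the first inequality I would take any $\varphi \in \partial f$ and apply the subdifferential inequality (\ref{G}) at $a=1$: since $f\left( 1\right) =0$, this reads $f\left( t\right) \geq \varphi \left( 1\right) \left( t-1\right) $ for every $t\in \lbrack 0,\infty )$. Lifting this scalar inequality to the selfadjoint operator $\mathfrak{A}_{Q,P}$ through property (\ref{P}) yields $f\left( \mathfrak{A}_{Q,P}\right) \geq \varphi \left( 1\right) \left( \mathfrak{A}_{Q,P}-1_{\mathcal{B}_{2}\left( H\right) }\right) $ in the operator order of $\mathcal{B}\left( \mathcal{B}_{2}\left( H\right) \right) $. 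Taking the quadratic form at $P^{1/2}$ and invoking the two trace identities collapses the right-hand side to $\varphi \left( 1\right) \left( 1-1\right) =0$, which is exactly (\ref{e.2.4}).

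For the second inequality the hypothesis of continuous differentiability lets me use the reverse (tangent-line) estimate. For a differentiable convex $f$ the tangent at $t$ lies below the graph, so $f\left( 1\right) \geq f\left( t\right) +f^{\prime }\left( t\right) \left( 1-t\right) $; using $f\left( 1\right) =0$ this rearranges to $f\left( t\right) \leq tf^{\prime }\left( t\right) -f^{\prime }\left( t\right) $ for all $t\geq 0$. Since $f^{\prime }$ is continuous, $f^{\prime }\left( \mathfrak{A}_{Q,P}\right) $ is a well-defined bounded operator, and property (\ref{P}) lifts this to $f\left( \mathfrak{A}_{Q,P}\right) \leq \mathfrak{A}_{Q,P}f^{\prime }\left( \mathfrak{A}_{Q,P}\right) -f^{\prime }\left( \mathfrak{A}_{Q,P}\right) $. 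Evaluating the quadratic form at $P^{1/2}$ and noting that the functional calculus sends the product $\ell f^{\prime }$ to $\mathfrak{A}_{Q,P}f^{\prime }\left( \mathfrak{A}_{Q,P}\right) $ identifies the two terms on the right as $S_{\ell f^{\prime }}\left( Q,P\right) $ and $S_{f^{\prime }}\left( Q,P\right) $, giving (\ref{e.2.5}).

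The only point requiring care is the two-step passage from a scalar inequality to an operator inequality and back to a scalar inequality for $S_{f}$. This is legitimate precisely because (\ref{P}) is the monotonicity property tailored to this purpose, and because the map $T\mapsto \left\langle TP^{1/2},P^{1/2}\right\rangle _{2}$ is a positive linear functional, hence preserves the operator order. I do not anticipate any genuine obstacle beyond this bookkeeping: the positivity of $\mathfrak{A}_{Q,P}$ and the normalisations $\limfunc{tr}\left( Q\right) =\limfunc{tr}\left( P\right) =1$ supply everything needed.
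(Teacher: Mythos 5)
Your proposal is correct and follows essentially the same route as the paper: the subgradient inequality at $1$ (the paper uses $f_{+}^{\prime }\left( 1\right)$, you use an arbitrary $\varphi \in \partial f$, which is equivalent) lifted to $\mathfrak{A}_{Q,P}$ via property (\ref{P}) and evaluated at $T=P^{1/2}$ gives (\ref{e.2.4}), and the tangent-line inequality $\left( t-1\right) f^{\prime }\left( t\right) \geq f\left( t\right)$ treated the same way gives (\ref{e.2.5}). The trace normalisations $\left\langle \mathfrak{A}_{Q,P}P^{1/2},P^{1/2}\right\rangle _{2}=\limfunc{tr}\left( Q\right) =1$ and $\left\langle P^{1/2},P^{1/2}\right\rangle _{2}=\limfunc{tr}\left( P\right) =1$ that you isolate are exactly the computations the paper performs implicitly when it substitutes $T=P^{1/2}$.
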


\begin{proof}
Since $f$ is convex and normalized, then by the gradient inequality (\ref{G}%
) we have%
\begin{equation*}
f\left( t\right) \geq \left( t-1\right) f_{+}^{\prime }\left( 1\right)
\end{equation*}%
for $t>0.$

Applying the property (\ref{P}) for the operator $\mathfrak{A}_{Q,P},$ then
we have for any $T\in \mathcal{B}_{2}\left( H\right) $%
\begin{eqnarray*}
\left\langle f\left( \mathfrak{A}_{Q,P}\right) T,T\right\rangle _{2} &\geq
&f_{+}^{\prime }\left( 1\right) \left\langle \left( \mathfrak{A}_{Q,P}-1_{%
\mathcal{B}_{2}\left( H\right) }\right) T,T\right\rangle _{2} \\
&=&f_{+}^{\prime }\left( 1\right) \left[ \left\langle \mathfrak{A}%
_{Q,P}T,T\right\rangle _{2}-\left\Vert T\right\Vert _{2}\right] ,
\end{eqnarray*}%
which, in terms of trace, can be written as%
\begin{equation}
\limfunc{tr}\left( T^{\ast }f\left( \mathfrak{A}_{Q,P}\right) T\right) \geq
f_{+}^{\prime }\left( 1\right) \left[ \limfunc{tr}\left( \left\vert
Q^{1/2}TP^{-1/2}\right\vert ^{2}\right) -\limfunc{tr}\left( \left\vert
T\right\vert ^{2}\right) \right]  \label{e.2.6}
\end{equation}%
for any $T\in \mathcal{B}_{2}\left( H\right) .$

The inequality (\ref{e.2.6}) is of interest in itself.

Now, if we take in (\ref{e.2.6}) $T=P^{1/2}$ where $P\in S_{1}\left(
H\right) ,$ with $P$ invertible, then we get%
\begin{equation*}
S_{f}\left( Q,P\right) \geq f_{+}^{\prime }\left( 1\right) \left[ \limfunc{tr%
}\left( Q\right) -\limfunc{tr}\left( P\right) \right] =0
\end{equation*}%
and the inequality (\ref{e.2.4}) is proved.

Further, if $f$ is continuously differentiable, then by the gradient
inequality we also have%
\begin{equation*}
\left( t-1\right) f^{\prime }\left( t\right) \geq f\left( t\right)
\end{equation*}%
for $t>0.$

Applying the property (\ref{P}) for the operator $\mathfrak{A}_{Q,P},$ then
we have for any $T\in \mathcal{B}_{2}\left( H\right) $ 
\begin{equation*}
\left\langle \left( \mathfrak{A}_{Q,P}-1_{\mathcal{B}_{2}\left( H\right)
}\right) f^{\prime }\left( \mathfrak{A}_{Q,P}\right) T,T\right\rangle
_{2}\geq \left\langle f\left( \mathfrak{A}_{Q,P}\right) T,T\right\rangle
_{2},
\end{equation*}%
namely 
\begin{equation*}
\left\langle \mathfrak{A}_{Q,P}f^{\prime }\left( \mathfrak{A}_{Q,P}\right)
T,T\right\rangle _{2}-\left\langle f^{\prime }\left( \mathfrak{A}%
_{Q,P}\right) T,T\right\rangle _{2}\geq \left\langle f\left( \mathfrak{A}%
_{Q,P}\right) T,T\right\rangle _{2},
\end{equation*}%
for any $T\in \mathcal{B}_{2}\left( H\right) ,$ or in terms of trace%
\begin{equation}
\limfunc{tr}\left( T^{\ast }\mathfrak{A}_{Q,P}f^{\prime }\left( \mathfrak{A}%
_{Q,P}\right) T\right) -\limfunc{tr}\left( T^{\ast }f^{\prime }\left( 
\mathfrak{A}_{Q,P}\right) T\right) \geq \limfunc{tr}\left( T^{\ast }f\left( 
\mathfrak{A}_{Q,P}\right) T\right) ,  \label{e.2.7}
\end{equation}%
for any $T\in \mathcal{B}_{2}\left( H\right) .$

This inequality is also of interest in itself.

If in (\ref{e.2.7}) we take $T=P^{1/2},$ where $P\in S_{1}\left( H\right) ,$
with $P$ invertible, then we get the desired result (\ref{e.2.5}).
\end{proof}

\begin{remark}
\label{r.2.1}If we take in (\ref{e.2.5}) $f:\left( 0,\infty \right)
\rightarrow \mathbb{R}$, $f\left( t\right) =-\ln t$ then for $Q,P\in
S_{1}\left( H\right) $ and $Q,P$ invertible we have%
\begin{equation}
0\leq U\left( P,Q\right) \leq \chi ^{2}\left( P,Q\right) .  \label{e.2.7.1}
\end{equation}
\end{remark}

We need the following lemma that is of interest in itself.

\begin{lemma}
\label{l.2.1}Let $S$ be a selfadjoint operator on the Hilbert space $\left(
K,\left\langle \cdot ,\cdot \right\rangle \right) $ and with spectrum $%
\limfunc{Sp}\left( S\right) \subseteq \left[ \gamma ,\Gamma \right] $ for
some real numbers $\gamma ,\Gamma .$ If $g:\left[ \gamma ,\Gamma \right]
\rightarrow \mathbb{C}$ is a continuous function such that 
\begin{equation}
\left\vert g\left( t\right) -\lambda \right\vert \leq \rho \text{ for any }%
t\in \left[ \gamma ,\Gamma \right]  \label{e.2.8}
\end{equation}%
for some complex number $\lambda \in \mathbb{C}$ and positive number $\rho ,$
then%
\begin{align}
\left\vert \left\langle Sg\left( S\right) x,x\right\rangle -\left\langle
Sx,x\right\rangle \left\langle g\left( S\right) x,x\right\rangle \right\vert
& \leq \rho \left\langle \left\vert S-\left\langle Sx,x\right\rangle
1_{H}\right\vert x,x\right\rangle  \label{e.2.9} \\
& \leq \rho \left[ \left\langle S^{2}x,x\right\rangle -\left\langle
Sx,x\right\rangle ^{2}\right] ^{1/2}  \notag
\end{align}%
for any $x\in K,$ $\left\Vert x\right\Vert =1.$
\end{lemma}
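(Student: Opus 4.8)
The plan is to recognize the left-hand side of (\ref{e.2.9}) as a covariance-type functional and to bound it by exploiting that $S$ commutes with $g(S)$. Writing $m := \langle Sx, x\rangle$, which is real since $S$ is selfadjoint, the first step is to establish the algebraic identity
\begin{equation*}
\langle Sg(S)x, x\rangle - \langle Sx, x\rangle\langle g(S)x, x\rangle = \langle (S - m 1_K)(g(S) - \lambda 1_K)x, x\rangle,
\end{equation*}
valid for every $\lambda \in \mathbb{C}$. This follows by expanding the right-hand side and noting that the constant shifts cancel, the cross terms producing $\lambda(\langle Sx,x\rangle - m) = 0$. The point is that $\lambda$ is now free, and it will be taken to be the center of a disk containing $g([\gamma,\Gamma])$, so that the hypothesis (\ref{e.2.8}) reads $\|g(S) - \lambda 1_K\| = \sup_{t\in[\gamma,\Gamma]}|g(t) - \lambda| \leq \rho$.

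For the first inequality I would set $A := S - m 1_K$ and use its polar decomposition $A = \operatorname{sgn}(A)|A|$ inside the functional calculus of $S$. Since $A$, $|A|$, $\operatorname{sgn}(A)$, $|A|^{1/2}$ and $g(S) - \lambda 1_K$ are all continuous functions of the single selfadjoint operator $S$, they mutually commute, which permits the rewriting
\begin{equation*}
A(g(S) - \lambda 1_K) = |A|^{1/2}\,\operatorname{sgn}(A)(g(S) - \lambda 1_K)\,|A|^{1/2}.
\end{equation*}
Putting $y := |A|^{1/2}x$ and moving one factor across the inner product gives $\langle A(g(S)-\lambda 1_K)x, x\rangle = \langle \operatorname{sgn}(A)(g(S)-\lambda 1_K)y, y\rangle$. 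The elementary bound $|\langle Tz, z\rangle| \leq \|T\|\,\|z\|^2$ together with $\|\operatorname{sgn}(A)\| \leq 1$ and $\|g(S) - \lambda 1_K\| \leq \rho$ then majorises this by $\rho\|y\|^2 = \rho\langle |A|x, x\rangle = \rho\langle |S - m1_K|x, x\rangle$, which is precisely the first inequality in (\ref{e.2.9}).

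The second inequality is a scalar Cauchy--Schwarz step: applying $\langle Bx, x\rangle \leq \|Bx\|\,\|x\| = \langle B^2 x, x\rangle^{1/2}$ with $B = |S - m1_K| \geq 0$ and $\|x\| = 1$, and using $|S - m1_K|^2 = (S - m1_K)^2$, yields $\langle |S - m1_K|x, x\rangle \leq \langle (S - m1_K)^2 x, x\rangle^{1/2}$; expanding $(S - m1_K)^2 = S^2 - 2mS + m^2 1_K$ and substituting $m = \langle Sx, x\rangle$ collapses the right-hand side to $[\langle S^2 x, x\rangle - \langle Sx, x\rangle^2]^{1/2}$. I expect the one delicate point to be the factorization step: one must check that $\operatorname{sgn}(A)$ and $|A|^{1/2}$ genuinely are continuous functions of $S$ (so all the commutations are legitimate) and that $\operatorname{sgn}(A)$ has norm at most one even when $A$ is not invertible. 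A cleaner route avoids the polar decomposition altogether: passing to the spectral measure $E$ of $S$, the functional $\langle \cdot\, x, x\rangle$ becomes integration against the probability measure $\mu_x(\cdot) = \langle E(\cdot)x, x\rangle$, and the whole estimate reduces to the scalar triangle and Cauchy--Schwarz inequalities $|\int (t-m)(g(t)-\lambda)\,d\mu_x| \leq \rho \int |t - m|\,d\mu_x \leq \rho(\int(t-m)^2\,d\mu_x)^{1/2}$.
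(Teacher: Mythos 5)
Your argument is correct, and its skeleton coincides with the paper's proof: the same covariance identity
\begin{equation*}
\left\langle Sg\left( S\right) x,x\right\rangle -\left\langle
Sx,x\right\rangle \left\langle g\left( S\right) x,x\right\rangle
=\left\langle \left( S-\left\langle Sx,x\right\rangle 1_{K}\right) \left(
g\left( S\right) -\lambda 1_{K}\right) x,x\right\rangle
\end{equation*}
with $\lambda $ left free, and the same Cauchy--Schwarz step for the second inequality. The only place you genuinely diverge is the middle estimate: you sandwich $\mathrm{sgn}\left( A\right) \left( g\left( S\right) -\lambda 1_{K}\right) $ between two copies of $\left\vert A\right\vert ^{1/2}$ and invoke the numerical-range bound $\left\vert \left\langle Tz,z\right\rangle \right\vert \leq \left\Vert T\right\Vert \left\Vert z\right\Vert ^{2}$, whereas the paper applies the modulus inequality $\left\vert \left\langle Bx,x\right\rangle \right\vert \leq \left\langle \left\vert B\right\vert x,x\right\rangle $ to $B=\left( S-\left\langle Sx,x\right\rangle 1_{K}\right) \left( g\left( S\right) -\lambda 1_{K}\right) $ together with the operator-order inequality $\left\vert B\right\vert =\left\vert S-\left\langle Sx,x\right\rangle 1_{K}\right\vert \left\vert g\left( S\right) -\lambda 1_{K}\right\vert \leq \rho \left\vert S-\left\langle Sx,x\right\rangle 1_{K}\right\vert $; both devices deliver exactly the bound $\rho \left\langle \left\vert S-\left\langle Sx,x\right\rangle 1_{K}\right\vert x,x\right\rangle $. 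The delicate point you flag is real but harmless: $\mathrm{sgn}$ is not continuous at $0$, so $\mathrm{sgn}\left( A\right) $ must be taken in the Borel (not continuous) functional calculus, where it still commutes with every function of $S$ and has norm at most $1$ when one sets $\mathrm{sgn}\left( 0\right) =0$; your closing spectral-measure reformulation, which reduces everything to $\left\vert \int \left( t-m\right) \left( g\left( t\right) -\lambda \right) d\mu _{x}\right\vert \leq \rho \int \left\vert t-m\right\vert d\mu _{x}$, sidesteps the issue entirely and is arguably the cleanest version of the argument.
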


\begin{proof}
We observe that 
\begin{equation}
\left\langle Sg\left( S\right) x,x\right\rangle -\left\langle
Sx,x\right\rangle \left\langle g\left( S\right) x,x\right\rangle
=\left\langle \left( S-\left\langle Sx,x\right\rangle 1_{H}\right) \left(
g\left( S\right) -\lambda 1_{H}\right) x,x\right\rangle  \label{e.2.10}
\end{equation}%
for any $x\in K,$ $\left\Vert x\right\Vert =1.$

For any selfadjoint operator $B$ we have the modulus inequality%
\begin{equation}
\left\vert \left\langle Bx,x\right\rangle \right\vert \leq \left\langle
\left\vert B\right\vert x,x\right\rangle \text{ for any }x\in K,\left\Vert
x\right\Vert =1.  \label{e.2.11}
\end{equation}%
Also, utilizing the continuous functional calculus we have for each fixed $%
x\in K,\left\Vert x\right\Vert =1$ 
\begin{align*}
\left\vert \left( S-\left\langle Sx,x\right\rangle 1_{H}\right) \left(
g\left( S\right) -\lambda 1_{H}\right) \right\vert & =\left\vert
S-\left\langle Sx,x\right\rangle 1_{H}\right\vert \left\vert g\left(
S\right) -\lambda 1_{H}\right\vert \\
& \leq \rho \left\vert S-\left\langle Sx,x\right\rangle 1_{H}\right\vert ,
\end{align*}%
which implies that%
\begin{equation}
\left\langle \left\vert \left( S-\left\langle Sx,x\right\rangle 1_{H}\right)
\left( g\left( S\right) -\lambda 1_{H}\right) \right\vert x,x\right\rangle
\leq \rho \left\langle \left\vert S-\left\langle Sx,x\right\rangle
1_{H}\right\vert x,x\right\rangle  \label{e.2.12}
\end{equation}%
for any $x\in K,\left\Vert x\right\Vert =1.$

Therefore, by taking the modulus in (\ref{e.2.10}) and utilizing (\ref%
{e.2.11}) and (\ref{e.2.12}) we get%
\begin{align}
& \left\vert \left\langle Sg\left( S\right) x,x\right\rangle -\left\langle
Sx,x\right\rangle \left\langle g\left( S\right) x,x\right\rangle \right\vert
\label{e.2.13} \\
& =\left\vert \left\langle \left( S-\left\langle Sx,x\right\rangle
1_{H}\right) \left( g\left( S\right) -\lambda 1_{H}\right) x,x\right\rangle
\right\vert  \notag \\
& \leq \left\langle \left\vert \left( S-\left\langle Sx,x\right\rangle
1_{H}\right) \left( g\left( S\right) -\lambda 1_{H}\right) \right\vert
x,x\right\rangle  \notag \\
& \leq \rho \left\langle \left\vert S-\left\langle Sx,x\right\rangle
1_{H}\right\vert x,x\right\rangle  \notag
\end{align}%
for any $x\in K,\left\Vert x\right\Vert =1,$ which proves the first
inequality in (\ref{e.2.9}).

Using Schwarz inequality we also have%
\begin{align*}
\left\langle \left\vert S-\left\langle Sx,x\right\rangle 1_{H}\right\vert
x,x\right\rangle & \leq \left\langle \left( S-\left\langle Sx,x\right\rangle
1_{H}\right) ^{2}x,x\right\rangle ^{1/2} \\
& =\left[ \left\langle S^{2}x,x\right\rangle -\left\langle Sx,x\right\rangle
^{2}\right] ^{1/2}
\end{align*}%
for any $x\in K,\left\Vert x\right\Vert =1,$ and the lemma is proved.
\end{proof}

\begin{corollary}
\label{c.2.1}With the assumption of Lemma \ref{l.2.1}, we have%
\begin{eqnarray}
0 &\leq &\left\langle S^{2}x,x\right\rangle -\left\langle Sx,x\right\rangle
^{2}\leq \frac{1}{2}\left( \Gamma -\gamma \right) \left\langle \left\vert
S-\left\langle Sx,x\right\rangle 1_{H}\right\vert x,x\right\rangle
\label{e.2.14} \\
&\leq &\frac{1}{2}\left( \Gamma -\gamma \right) \left[ \left\langle
S^{2}x,x\right\rangle -\left\langle Sx,x\right\rangle ^{2}\right] ^{1/2}\leq 
\frac{1}{4}\left( \Gamma -\gamma \right) ^{2},  \notag
\end{eqnarray}%
for any $x\in K,$ $\left\Vert x\right\Vert =1.$
\end{corollary}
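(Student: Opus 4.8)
The plan is to specialize Lemma \ref{l.2.1} to the identity function $g(t)=t$, so that $g(S)=S$. For values in $[\gamma,\Gamma]$ the natural choice of centre and radius is $\lambda=\frac{\gamma+\Gamma}{2}$ and $\rho=\frac{\Gamma-\gamma}{2}$; indeed, for any $t\in[\gamma,\Gamma]$ one has $\left|t-\frac{\gamma+\Gamma}{2}\right|\leq\frac{\Gamma-\gamma}{2}$, so the hypothesis (\ref{e.2.8}) is satisfied with these constants.

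With $g(S)=S$, the quantity on the left of (\ref{e.2.9}) becomes $\left|\langle S^{2}x,x\rangle-\langle Sx,x\rangle^{2}\right|$. Since $S$ is selfadjoint and $\left\Vert x\right\Vert =1$, the Schwarz inequality gives $\langle Sx,x\rangle^{2}\leq\langle S^{2}x,x\rangle$, equivalently $\langle(S-\langle Sx,x\rangle 1_{H})^{2}x,x\rangle\geq0$; hence the expression inside the modulus is already nonnegative and the modulus may be dropped. Substituting $\rho=\frac{1}{2}(\Gamma-\gamma)$ into (\ref{e.2.9}) then yields at once both the leftmost inequality $0\leq\langle S^{2}x,x\rangle-\langle Sx,x\rangle^{2}$ and the two middle inequalities in (\ref{e.2.14}).

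It remains to establish the final bound, and this is the only step that is not completely immediate. Write $V:=\langle S^{2}x,x\rangle-\langle Sx,x\rangle^{2}\geq0$. The chain just proved reads $V\leq\frac{1}{2}(\Gamma-\gamma)V^{1/2}$. If $V>0$ we divide by $V^{1/2}$ to obtain $V^{1/2}\leq\frac{1}{2}(\Gamma-\gamma)$, and multiplying this by $\frac{1}{2}(\Gamma-\gamma)$ gives $\frac{1}{2}(\Gamma-\gamma)V^{1/2}\leq\frac{1}{4}(\Gamma-\gamma)^{2}$; when $V=0$ the inequality is trivial. Alternatively, one may argue directly via Popoviciu's inequality: from the operator inequality $(S-\gamma 1_{H})(\Gamma 1_{H}-S)\geq0$, valid because the spectrum of $S$ lies in $[\gamma,\Gamma]$, one obtains after taking $\langle\cdot\, x,x\rangle$ the bound $V\leq(\langle Sx,x\rangle-\gamma)(\Gamma-\langle Sx,x\rangle)\leq\frac{1}{4}(\Gamma-\gamma)^{2}$ by the arithmetic--geometric mean inequality. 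Either route closes the last inequality and completes the proof.
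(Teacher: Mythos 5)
Your proposal is correct and follows essentially the same route as the paper: apply Lemma \ref{l.2.1} with $g(t)=t$, $\lambda=\frac{1}{2}(\Gamma+\gamma)$, $\rho=\frac{1}{2}(\Gamma-\gamma)$, drop the modulus since the variance is nonnegative, and then deduce $\left[\left\langle S^{2}x,x\right\rangle-\left\langle Sx,x\right\rangle^{2}\right]^{1/2}\leq\frac{1}{2}(\Gamma-\gamma)$ by comparing the first and last terms of the resulting chain. The alternative closing argument via $(S-\gamma 1_{H})(\Gamma 1_{H}-S)\geq 0$ is a valid bonus but not needed.
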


\begin{proof}
If we take in Lemma \ref{l.2.1} $g\left( t\right) =t,$ $\lambda =\frac{1}{2}%
\left( \Gamma +\gamma \right) $ and $\rho =\frac{1}{2}\left( \Gamma -\gamma
\right) ,$ then we get%
\begin{align}
0& \leq \left\langle S^{2}x,x\right\rangle -\left\langle Sx,x\right\rangle
^{2}\leq \frac{1}{2}\left( \Gamma -\gamma \right) \left\langle \left\vert
S-\left\langle Sx,x\right\rangle 1_{H}\right\vert x,x\right\rangle
\label{e.2.15} \\
& \leq \frac{1}{2}\left( \Gamma -\gamma \right) \left[ \left\langle
S^{2}x,x\right\rangle -\left\langle Sx,x\right\rangle ^{2}\right] ^{1/2} 
\notag
\end{align}%
for any $x\in K,$ $\left\Vert x\right\Vert =1.$

From the first and last terms in (\ref{e.2.15}) we have%
\begin{equation*}
\left[ \left\langle S^{2}x,x\right\rangle -\left\langle Sx,x\right\rangle
^{2}\right] ^{1/2}\leq \frac{1}{2}\left( \Gamma -\gamma \right) ,
\end{equation*}%
which proves the rest of (\ref{e.2.14}).
\end{proof}

We can prove the following result that provides simpler upper bounds for the
quantum $f$-divergence when the operators $P$ and $Q$ satisfy the condition (%
\ref{e.2.2}).

\begin{theorem}
\label{t.2.2}Let $f:[0,\infty )\rightarrow \mathbb{R}$ be a continuous
convex function that is normalized. If $Q,P\in S_{1}\left( H\right) ,$ with $%
P$ invertible, and there exists $R\geq 1\geq r\geq 0$ such that%
\begin{equation}
r\limfunc{tr}\left( \left\vert T\right\vert ^{2}\right) \leq \limfunc{tr}%
\left( \left\vert Q^{1/2}TP^{-1/2}\right\vert ^{2}\right) \leq R\limfunc{tr}%
\left( \left\vert T\right\vert ^{2}\right)  \label{e.2.16}
\end{equation}%
for any $T\in \mathcal{B}_{2}\left( H\right) ,$ then 
\begin{align}
0& \leq S_{f}\left( Q,P\right) \leq \frac{1}{2}\left[ f_{-}^{\prime }\left(
R\right) -f_{+}^{\prime }\left( r\right) \right] V\left( Q,P\right)
\label{e.2.17} \\
& \leq \frac{1}{2}\left[ f_{-}^{\prime }\left( R\right) -f_{+}^{\prime
}\left( r\right) \right] \chi \left( Q,P\right)  \notag \\
& \leq \frac{1}{4}\left( R-r\right) \left[ f_{-}^{\prime }\left( R\right)
-f_{+}^{\prime }\left( r\right) \right] .  \notag
\end{align}
\end{theorem}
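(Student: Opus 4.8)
The plan is to transport everything to the selfadjoint Araki operator $S:=\mathfrak{A}_{Q,P}$ on the Hilbert space $\left( \mathcal{B}_{2}\left( H\right) ,\left\langle \cdot ,\cdot \right\rangle _{2}\right) $ and to the single vector $x:=P^{1/2}$. First I would note that the hypothesis (\ref{e.2.16}) is exactly condition (\ref{e.2.2}), so that $r1_{\mathcal{B}_{2}\left( H\right) }\leq \mathfrak{A}_{Q,P}\leq R1_{\mathcal{B}_{2}\left( H\right) }$ and hence $\limfunc{Sp}\left( S\right) \subseteq \left[ r,R\right] $. Since $\left\Vert P^{1/2}\right\Vert _{2}^{2}=\limfunc{tr}\left( P\right) =1$, the element $x=P^{1/2}$ is a unit vector; and because $\mathfrak{A}_{Q,P}P^{1/2}=QP^{-1/2}$, cyclicity of the trace gives $\left\langle Sx,x\right\rangle _{2}=\limfunc{tr}\left( P^{1/2}QP^{-1/2}\right) =\limfunc{tr}\left( Q\right) =1$. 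With this dictionary $S_{f}\left( Q,P\right) =\left\langle f\left( S\right) x,x\right\rangle _{2}$, and the lower bound $0\leq S_{f}\left( Q,P\right) $ is precisely Theorem \ref{t.2.1}.

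For the upper bounds the engine is the gradient inequality (\ref{G}) together with Lemma \ref{l.2.1}. Choosing $\varphi \in \partial f$ and setting $x=1$, $a=t$ in (\ref{G}), the normalisation $f\left( 1\right) =0$ yields the pointwise estimate $f\left( t\right) \leq \left( t-1\right) \varphi \left( t\right) $ on $\left[ r,R\right] $. Applying property (\ref{P}) to $S$ gives $f\left( S\right) \leq \left( S-1_{\mathcal{B}_{2}\left( H\right) }\right) \varphi \left( S\right) $, whence, using $\left\langle Sx,x\right\rangle _{2}=1$,
\[
S_{f}\left( Q,P\right) =\left\langle f\left( S\right) x,x\right\rangle _{2}\leq \left\langle S\varphi \left( S\right) x,x\right\rangle _{2}-\left\langle Sx,x\right\rangle _{2}\left\langle \varphi \left( S\right) x,x\right\rangle _{2}.
\]
I would then invoke Lemma \ref{l.2.1} with $g=\varphi $, $\lambda =\frac{1}{2}\left( f_{-}^{\prime }\left( R\right) +f_{+}^{\prime }\left( r\right) \right) $ and $\rho =\frac{1}{2}\left( f_{-}^{\prime }\left( R\right) -f_{+}^{\prime }\left( r\right) \right) $: since $\varphi $ is nondecreasing with $f_{+}^{\prime }\left( r\right) \leq \varphi \left( t\right) \leq f_{-}^{\prime }\left( R\right) $ on $\left[ r,R\right] $, condition (\ref{e.2.8}) holds and (\ref{e.2.9}) gives
\[
S_{f}\left( Q,P\right) \leq \rho \left\langle \left\vert S-\left\langle Sx,x\right\rangle _{2}1_{\mathcal{B}_{2}\left( H\right) }\right\vert x,x\right\rangle _{2}\leq \rho \left[ \left\langle S^{2}x,x\right\rangle _{2}-\left\langle Sx,x\right\rangle _{2}^{2}\right] ^{1/2}.
\]

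The final step is to identify the two quantities on the right with the variational and $\chi ^{2}$-distances. Since $\left\vert \cdot -1\right\vert $ applied to $S$ is $\left\vert S-1_{\mathcal{B}_{2}\left( H\right) }\right\vert $, the first term equals $\left\langle \left\vert S-1_{\mathcal{B}_{2}\left( H\right) }\right\vert x,x\right\rangle _{2}=S_{\left\vert \cdot -1\right\vert }\left( Q,P\right) =V\left( Q,P\right) $, which (after inserting $\left\langle Sx,x\right\rangle _{2}=1$) produces the first bound in (\ref{e.2.17}). A short computation using $\mathfrak{A}_{Q,P}^{2}P^{1/2}=Q^{2}P^{-3/2}$ shows $\left\langle S^{2}x,x\right\rangle _{2}=\limfunc{tr}\left( Q^{2}P^{-1}\right) $, so $\left\langle S^{2}x,x\right\rangle _{2}-\left\langle Sx,x\right\rangle _{2}^{2}=\limfunc{tr}\left( Q^{2}P^{-1}\right) -1=\chi ^{2}\left( Q,P\right) $; the second inequality in (\ref{e.2.9}) then gives $V\left( Q,P\right) \leq \chi \left( Q,P\right) $ and hence the second bound. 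Lastly, Corollary \ref{c.2.1} with $\left[ \gamma ,\Gamma \right] =\left[ r,R\right] $ bounds this variance by $\frac{1}{4}\left( R-r\right) ^{2}$ via (\ref{e.2.14}), i.e. $\chi \left( Q,P\right) \leq \frac{1}{2}\left( R-r\right) $, giving the closing estimate $\frac{1}{4}\left( R-r\right) \left[ f_{-}^{\prime }\left( R\right) -f_{+}^{\prime }\left( r\right) \right] $.

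The main obstacle I anticipate is the rigorous treatment of the one-sided derivatives in the non-differentiable case: a subdifferential selection $\varphi \in \partial f$ need not be continuous, so applying the functional calculus and Lemma \ref{l.2.1} (stated for continuous $g$) to $\varphi \left( S\right) $ requires either passing to the Borel functional calculus or approximating $f$ by continuously differentiable convex functions and taking limits. One must also verify the endpoint estimates $f_{+}^{\prime }\left( r\right) \leq \varphi \left( t\right) \leq f_{-}^{\prime }\left( R\right) $ with care when $r$ or $R$ belongs to $\limfunc{Sp}\left( S\right) $. Once the identification $S=\mathfrak{A}_{Q,P}$, $x=P^{1/2}$ is fixed, the remaining steps are routine trace manipulations.
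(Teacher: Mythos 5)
Your proposal is correct and follows essentially the same route as the paper: both reduce to the Araki operator $\mathfrak{A}_{Q,P}$ acting on $\left( \mathcal{B}_{2}\left( H\right) ,\left\langle \cdot ,\cdot \right\rangle _{2}\right) $ with unit vector $P^{1/2}$, bound $S_{f}$ by $S_{\ell f^{\prime }}-S_{f^{\prime }}$ via the gradient inequality (this is exactly (\ref{e.2.5}) of Theorem \ref{t.2.1}), and then apply Lemma \ref{l.2.1} and Corollary \ref{c.2.1} to $g=f^{\prime }$ with $\lambda =\frac{1}{2}\left( f_{-}^{\prime }\left( R\right) +f_{+}^{\prime }\left( r\right) \right) $, $\rho =\frac{1}{2}\left( f_{-}^{\prime }\left( R\right) -f_{+}^{\prime }\left( r\right) \right) $, identifying the resulting terms with $V\left( Q,P\right) $ and $\chi \left( Q,P\right) $. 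The differentiability issue you flag is handled in the paper simply by assuming, without loss of generality, that $f$ is continuously differentiable, which matches your proposed approximation remedy.
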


\begin{proof}
Without loosing the generality, we prove the inequality in the case that $f$
is continuously differentiable on $\left( 0,\infty \right) .$

Since $f^{\prime }$ is monotonic nondecreasing on $\left[ r,R\right] $ we
have that 
\begin{equation*}
f^{\prime }\left( r\right) \leq f^{\prime }\left( t\right) \leq f^{\prime
}\left( R\right) \text{ for any }t\in \left[ r,R\right] ,
\end{equation*}%
which implies that%
\begin{equation*}
\left\vert f^{\prime }\left( t\right) -\frac{f^{\prime }\left( R\right)
+f^{\prime }\left( r\right) }{2}\right\vert \leq \frac{1}{2}\left[ f^{\prime
}\left( R\right) -f^{\prime }\left( r\right) \right]
\end{equation*}%
for any $t\in \left[ r,R\right] .$

Applying Lemma \ref{l.2.1} and Corollary \ref{c.2.1} in the Hilbert space $%
\left( \mathcal{B}_{2}\left( H\right) ,\left\langle \cdot ,\cdot
\right\rangle _{2}\right) $ and for the selfadjoint operator $\mathfrak{A}%
_{Q,P}$ we have%
\begin{align*}
& \left\vert \left\langle \mathfrak{A}_{Q,P}f^{\prime }\left( \mathfrak{A}%
_{Q,P}\right) T,T\right\rangle _{2}-\left\langle \mathfrak{A}%
_{Q,P}T,T\right\rangle _{2}\left\langle f^{\prime }\left( \mathfrak{A}%
_{Q,P}\right) T,T\right\rangle _{2}\right\vert \\
& \leq \frac{1}{2}\left[ f^{\prime }\left( R\right) -f^{\prime }\left(
r\right) \right] \left\langle \left\vert \mathfrak{A}_{Q,P}-\left\langle 
\mathfrak{A}_{Q,P}T,T\right\rangle _{2}1_{\mathcal{B}_{2}\left( H\right)
}\right\vert T,T\right\rangle _{2} \\
& \leq \frac{1}{2}\left[ f^{\prime }\left( R\right) -f^{\prime }\left(
r\right) \right] \left[ \left\langle \mathfrak{A}_{Q,P}^{2}T,T\right\rangle
_{2}-\left\langle \mathfrak{A}_{Q,P}T,T\right\rangle _{2}^{2}\right] ^{1/2}
\\
& \leq \frac{1}{4}\left( R-r\right) \left[ f_{-}^{\prime }\left( R\right)
-f_{+}^{\prime }\left( r\right) \right]
\end{align*}

for any $T\in \mathcal{B}_{2}\left( H\right) ,$ $\left\Vert T\right\Vert
_{2}=1,$ which is an inequality of interest in itself as well.

If in this inequality we take $T=P^{1/2},$ $P\in S_{1}\left( H\right) ,$
with $P$ invertible, then we get%
\begin{align*}
& \left\vert \left\langle \mathfrak{A}_{Q,P}f^{\prime }\left( \mathfrak{A}%
_{Q,P}\right) P^{1/2},P^{1/2}\right\rangle _{2}-\left\langle f^{\prime
}\left( \mathfrak{A}_{Q,P}\right) P^{1/2},P^{1/2}\right\rangle
_{2}\right\vert \\
& \leq \frac{1}{2}\left[ f^{\prime }\left( R\right) -f^{\prime }\left(
r\right) \right] \left\langle \left\vert \mathfrak{A}_{Q,P}-\left\langle 
\mathfrak{A}_{Q,P}P^{1/2},P^{1/2}\right\rangle _{2}1_{\mathcal{B}_{2}\left(
H\right) }\right\vert P^{1/2},P^{1/2}\right\rangle _{2} \\
& \leq \frac{1}{2}\left[ f^{\prime }\left( R\right) -f^{\prime }\left(
r\right) \right] \left[ \left\langle \mathfrak{A}_{Q,P}^{2}P^{1/2},P^{1/2}%
\right\rangle _{2}-\left\langle \mathfrak{A}_{Q,P}P^{1/2},P^{1/2}\right%
\rangle _{2}^{2}\right] ^{1/2} \\
& \leq \frac{1}{4}\left( R-r\right) \left[ f_{-}^{\prime }\left( R\right)
-f_{+}^{\prime }\left( r\right) \right] ,
\end{align*}%
which can be written as 
\begin{align*}
\left\vert S_{\ell f^{\prime }}\left( Q,P\right) -S_{f^{\prime }}\left(
Q,P\right) \right\vert & \leq \frac{1}{2}\left[ f_{-}^{\prime }\left(
R\right) -f_{+}^{\prime }\left( r\right) \right] V\left( Q,P\right) \\
& \leq \frac{1}{2}\left[ f_{-}^{\prime }\left( R\right) -f_{+}^{\prime
}\left( r\right) \right] \chi \left( Q,P\right) \\
& \leq \frac{1}{4}\left( R-r\right) \left[ f_{-}^{\prime }\left( R\right)
-f_{+}^{\prime }\left( r\right) \right] .
\end{align*}%
Making use of Theorem \ref{t.2.1} we deduce the desired result (\ref{e.2.17}%
).
\end{proof}

\begin{remark}
\label{r.2.2} If we take in (\ref{e.2.17}) $f\left( t\right) =t^{2}-1,$ then
we get%
\begin{align}
0& \leq \chi ^{2}\left( Q,P\right) \leq \frac{1}{2}\left( R-r\right) V\left(
Q,P\right) \leq \frac{1}{2}\left( R-r\right) \chi \left( Q,P\right)
\label{e.2.17.a} \\
& \leq \frac{1}{4}\left( R-r\right) ^{2}  \notag
\end{align}%
for $Q,P\in S_{1}\left( H\right) ,$ with $P$ invertible and satisfying the
condition (\ref{e.2.16}).

If we take in (\ref{e.2.17}) $f\left( t\right) =t\ln t$, then we get the
inequality%
\begin{align}
0& \leq U\left( Q,P\right) \leq \frac{1}{2}\ln \left( \frac{R}{r}\right)
V\left( Q,P\right) \leq \frac{1}{2}\ln \left( \frac{R}{r}\right) \chi \left(
Q,P\right)  \label{e.2.18} \\
& \leq \frac{1}{4}\left( R-r\right) \ln \left( \frac{R}{r}\right)  \notag
\end{align}%
provided that $Q,P\in S_{1}\left( H\right) ,$ with $P,$ $Q$ invertible and
satisfying the condition (\ref{e.2.16}).

With the same conditions and if we take $f\left( t\right) =-\ln t,$ then%
\begin{equation}
0\leq U\left( P,Q\right) \leq \frac{R-r}{2rR}V\left( Q,P\right) \leq \frac{%
R-r}{2rR}\chi \left( Q,P\right) \leq \frac{\left( R-r\right) ^{2}}{4rR}.
\label{e.2.19}
\end{equation}

If we take in (\ref{e.2.17}) $f\left( t\right) =f_{q}\left( t\right) =\frac{%
1-t^{q}}{1-q},$ then we get%
\begin{align}
0& \leq S_{f_{q}}\left( Q,P\right) \leq \frac{q}{2\left( 1-q\right) }\left( 
\frac{R^{1-q}-r^{1-q}}{R^{1-q}r^{1-q}}\right) V\left( Q,P\right)
\label{e.2.20} \\
& \leq \frac{q}{2\left( 1-q\right) }\left( \frac{R^{1-q}-r^{1-q}}{%
R^{1-q}r^{1-q}}\right) \chi \left( Q,P\right)  \notag \\
& \leq \frac{q}{4\left( 1-q\right) }\left( \frac{R^{1-q}-r^{1-q}}{%
R^{1-q}r^{1-q}}\right) \left( R-r\right)  \notag
\end{align}%
provided that $Q,P\in S_{1}\left( H\right) ,$ with $P,$ $Q$ invertible and
satisfying the condition (\ref{e.2.16}).
\end{remark}

\section{Other Reverse Inequalities}

Utilising different techniques we can obtain other upper bounds for the
quantum $f$-divergence as follows. Applications for Umegaki relative entropy
and $\chi ^{2}$-divergence are also provided. 

\begin{theorem}
\label{t.2.3}Let $f:[0,\infty )\rightarrow \mathbb{R}$ be a continuous
convex function that is normalized. If $Q,P\in S_{1}\left( H\right) ,$ with $%
P$ invertible, and there exists $R\geq 1\geq r\geq 0$ such that the
condition (\ref{e.2.16}) is satisfied, then 
\begin{equation}
0\leq S_{f}\left( Q,P\right) \leq \frac{\left( R-1\right) f\left( r\right)
+\left( 1-r\right) f\left( R\right) }{R-r}.  \label{e.2.21}
\end{equation}
\end{theorem}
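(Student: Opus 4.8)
The plan is to dominate $f$ on the spectral interval $[r,R]$ by the chord joining its endpoints and then transport this scalar inequality to the Araki operator via the order-preserving functional calculus. First I would record that the hypothesis (\ref{e.2.16}) is precisely the statement (\ref{e.2.2}), which is equivalent to $r1_{\mathcal{B}_2(H)}\leq \mathfrak{A}_{Q,P}\leq R1_{\mathcal{B}_2(H)}$, so that $\limfunc{Sp}(\mathfrak{A}_{Q,P})\subseteq[r,R]$. On this interval, convexity of $f$ gives the chord (secant) inequality
\[
f(t)\leq \frac{(R-t)f(r)+(t-r)f(R)}{R-r}=\alpha+\beta t,\qquad t\in[r,R],
\]
where $\alpha=\frac{Rf(r)-rf(R)}{R-r}$ and $\beta=\frac{f(R)-f(r)}{R-r}$. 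Note this is a genuine linear majorant valid for every $t\in[r,R]$ irrespective of the sign of $\beta$, which is what makes the order step clean.

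Next I would apply property (\ref{P}) to the selfadjoint operator $\mathfrak{A}_{Q,P}$, whose spectrum lies in $[r,R]$, to lift the scalar inequality to the operator inequality
\[
f(\mathfrak{A}_{Q,P})\leq \alpha 1_{\mathcal{B}_2(H)}+\beta\mathfrak{A}_{Q,P}
\]
in $\mathcal{B}\left(\mathcal{B}_2(H)\right)$. Taking the quadratic form against $T=P^{1/2}$, with $P\in S_1(H)$ invertible, and using the monotonicity of the form then yields
\[
S_f(Q,P)=\left\langle f(\mathfrak{A}_{Q,P})P^{1/2},P^{1/2}\right\rangle_2\leq \alpha\left\langle P^{1/2},P^{1/2}\right\rangle_2+\beta\left\langle \mathfrak{A}_{Q,P}P^{1/2},P^{1/2}\right\rangle_2.
\]

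Finally I would evaluate the two surviving inner products. Here $\left\langle P^{1/2},P^{1/2}\right\rangle_2=\limfunc{tr}(P)=1$, and since $\mathfrak{A}_{Q,P}P^{1/2}=QP^{1/2}P^{-1}=QP^{-1/2}$, cyclicity of the trace gives $\left\langle \mathfrak{A}_{Q,P}P^{1/2},P^{1/2}\right\rangle_2=\limfunc{tr}\left(P^{1/2}QP^{-1/2}\right)=\limfunc{tr}(Q)=1$. Substituting these and simplifying $\alpha+\beta$ produces exactly the right-hand side of (\ref{e.2.21}):
\[
\alpha+\beta=\frac{Rf(r)-rf(R)+f(R)-f(r)}{R-r}=\frac{(R-1)f(r)+(1-r)f(R)}{R-r}.
\]
The lower bound $0\leq S_f(Q,P)$ is already furnished by Theorem \ref{t.2.1}. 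I do not anticipate a serious obstacle: the only delicate point is the order-theoretic step, namely that the scalar majorization $f\leq\alpha+\beta\,\ell$ on $[r,R]$ transfers through the continuous functional calculus (this is exactly the content of property (\ref{P}) applied to $\mathfrak{A}_{Q,P}$), while the two trace evaluations and the arithmetic simplification of $\alpha+\beta$ are routine.
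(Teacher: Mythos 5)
Your proposal is correct and follows essentially the same route as the paper: the chord (secant) inequality from convexity on $[r,R]$, lifted to $f\left( \mathfrak{A}_{Q,P}\right)$ via property (\ref{P}), then evaluated at $T=P^{1/2}$ using $\limfunc{tr}\left( P\right) =\limfunc{tr}\left( Q\right) =1$. The only difference is cosmetic — you rewrite the chord as $\alpha +\beta t$ before lifting, whereas the paper keeps it in the two-term barycentric form — and your trace evaluations and the simplification of $\alpha +\beta$ match the paper's computation exactly.
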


\begin{proof}
By the convexity of $f$ we have%
\begin{equation*}
f\left( t\right) =f\left( \frac{\left( R-t\right) r+\left( t-r\right) R}{R-r}%
\right) \leq \frac{\left( R-t\right) f\left( r\right) +\left( t-r\right)
f\left( R\right) }{R-r}
\end{equation*}%
for any $t\in \left[ r,R\right] .$

This inequality implies the following inequality in the operator order of $%
\mathcal{B}\left( \mathcal{B}_{2}\left( H\right) \right) $%
\begin{equation*}
f\left( \mathfrak{A}_{Q,P}\right) \leq \frac{\left( R1_{\mathcal{B}%
_{2}\left( H\right) }-\mathfrak{A}_{Q,P}\right) f\left( r\right) +\left( 
\mathfrak{A}_{Q,P}-r1_{\mathcal{B}_{2}\left( H\right) }\right) f\left(
R\right) }{R-r},
\end{equation*}%
which can be written as%
\begin{align}
& \left\langle f\left( \mathfrak{A}_{Q,P}\right) T,T\right\rangle _{2}
\label{e.2.22} \\
& \leq \frac{f\left( r\right) }{R-r}\left\langle \left( R1_{\mathcal{B}%
_{2}\left( H\right) }-\mathfrak{A}_{Q,P}\right) T,T\right\rangle _{2}+\frac{%
f\left( R\right) }{R-r}\left\langle \left( \mathfrak{A}_{Q,P}-r1_{\mathcal{B}%
_{2}\left( H\right) }\right) T,T\right\rangle _{2}  \notag
\end{align}%
for any $T\in \mathcal{B}_{2}\left( H\right) .$

This inequality is of interest in itself.

Now, if we take in (\ref{e.2.22}) $T=P^{1/2},$ $P\in S_{1}\left( H\right) ,$
then we get the desired result (\ref{e.2.22}).
\end{proof}

\begin{remark}
\label{r.2.3}If we take in (\ref{e.2.21}) $f\left( t\right) =t^{2}-1,$ then
we get 
\begin{equation}
0\leq \chi ^{2}\left( Q,P\right) \leq \left( R-1\right) \left( 1-r\right) 
\frac{R+r+2}{R-r}  \label{e.2.23}
\end{equation}%
for $Q,P\in S_{1}\left( H\right) ,$ with $P$ invertible and satisfying the
condition (\ref{e.2.16}).

If we take in (\ref{e.2.21}) $f\left( t\right) =t\ln t$, then we get the
inequality%
\begin{equation}
0\leq U\left( Q,P\right) \leq \ln \left[ r^{\frac{\left( R-1\right) r}{R-r}%
}R^{\frac{R\left( 1-r\right) }{R-r}}\right]  \label{e.2.24}
\end{equation}%
provided that $Q,P\in S_{1}\left( H\right) ,$ with $P,$ $Q$ invertible and
satisfying the condition (\ref{e.2.16}).

If we take in (\ref{e.2.21}) $f\left( t\right) =-\ln t$, then we get the
inequality%
\begin{equation}
0\leq U\left( P,Q\right) \leq \ln \left[ r^{\frac{1-R}{R-r}}R^{\frac{r-1}{R-r%
}}\right]  \label{e.2.25}
\end{equation}%
for $Q,P\in S_{1}\left( H\right) ,$ with $P,$ $Q$ invertible and satisfying
the condition (\ref{e.2.16}).
\end{remark}

We also have:

\begin{theorem}
\label{t.2.4}Let $f:[0,\infty )\rightarrow \mathbb{R}$ be a continuous
convex function that is normalized. If $Q,P\in S_{1}\left( H\right) ,$ with $%
P$ invertible, and there exists $R>1>r\geq 0$ such that the condition (\ref%
{e.2.16}) is satisfied, then%
\begin{align}
0& \leq S_{f}\left( Q,P\right) \leq \frac{\left( R-1\right) \left(
1-r\right) }{R-r}\Psi _{f}\left( 1;r,R\right)  \label{e.2.26} \\
& \leq \frac{\left( R-1\right) \left( 1-r\right) }{R-r}\sup_{t\in \left(
r,R\right) }\Psi _{f}\left( t;r,R\right)  \notag \\
& \leq \left( R-1\right) \left( 1-r\right) \frac{f_{-}^{\prime }\left(
R\right) -f_{+}^{\prime }\left( r\right) }{R-r}  \notag \\
& \leq \frac{1}{4}\left( R-r\right) \left[ f_{-}^{\prime }\left( R\right)
-f_{+}^{\prime }\left( r\right) \right]  \notag
\end{align}%
where $\Psi _{f}\left( \cdot ;r,R\right) :\left( r,R\right) \rightarrow 
\mathbb{R}$ is defined by%
\begin{equation}
\Psi _{f}\left( t;r,R\right) =\frac{f\left( R\right) -f\left( t\right) }{R-t}%
-\frac{f\left( t\right) -f\left( r\right) }{t-r}.  \label{e.2.27}
\end{equation}%
We also have 
\begin{align}
0& \leq S_{f}\left( Q,P\right) \leq \frac{\left( R-1\right) \left(
1-r\right) }{R-r}\Psi _{f}\left( 1;r,R\right)  \label{e.2.7.a} \\
& \leq \frac{1}{4}\left( R-r\right) \Psi _{f}\left( 1;r,R\right)  \notag \\
& \leq \frac{1}{4}\left( R-r\right) \sup_{t\in \left( r,R\right) }\Psi
_{f}\left( t;r,R\right)  \notag \\
& \leq \frac{1}{4}\left( R-r\right) \left[ f_{-}^{\prime }\left( R\right)
-f_{+}^{\prime }\left( r\right) \right] .  \notag
\end{align}
\end{theorem}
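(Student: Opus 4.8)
The plan is to recognise that the first upper bound appearing in both chains, namely $S_{f}\left( Q,P\right) \leq \frac{\left( R-1\right) \left( 1-r\right) }{R-r}\Psi _{f}\left( 1;r,R\right) $, is nothing but a rewriting of the conclusion of Theorem \ref{t.2.3}. Since $f$ is normalized we have $f\left( 1\right) =0$, so directly from the definition (\ref{e.2.27}),
\begin{equation*}
\Psi _{f}\left( 1;r,R\right) =\frac{f\left( R\right) }{R-1}+\frac{f\left( r\right) }{1-r},
\end{equation*}
whence
\begin{equation*}
\frac{\left( R-1\right) \left( 1-r\right) }{R-r}\Psi _{f}\left( 1;r,R\right) =\frac{\left( 1-r\right) f\left( R\right) +\left( R-1\right) f\left( r\right) }{R-r}.
\end{equation*}
The right-hand side is precisely the bound in (\ref{e.2.21}); hence the first nontrivial inequality in each of (\ref{e.2.26}) and (\ref{e.2.7.a}) follows at once from Theorem \ref{t.2.3}, as does the lower bound $0\leq S_{f}\left( Q,P\right) $.

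It remains to record the elementary facts about $\Psi _{f}$ that drive the rest. First, since $r<1<R$ the point $t=1$ lies in $\left( r,R\right) $, so $\Psi _{f}\left( 1;r,R\right) \leq \sup_{t\in \left( r,R\right) }\Psi _{f}\left( t;r,R\right) $; moreover the convexity of $f$ gives the chord-slope inequality $\frac{f\left( t\right) -f\left( r\right) }{t-r}\leq \frac{f\left( R\right) -f\left( t\right) }{R-t}$, that is $\Psi _{f}\left( t;r,R\right) \geq 0$ for every $t\in \left( r,R\right) $. Second, the monotonicity of the chord slopes of a convex function yields
\begin{equation*}
\frac{f\left( R\right) -f\left( t\right) }{R-t}\leq f_{-}^{\prime }\left( R\right) \quad \text{and}\quad \frac{f\left( t\right) -f\left( r\right) }{t-r}\geq f_{+}^{\prime }\left( r\right)
\end{equation*}
for every $t\in \left( r,R\right) $, so that $\Psi _{f}\left( t;r,R\right) \leq f_{-}^{\prime }\left( R\right) -f_{+}^{\prime }\left( r\right) $ and hence $\sup_{t\in \left( r,R\right) }\Psi _{f}\left( t;r,R\right) \leq f_{-}^{\prime }\left( R\right) -f_{+}^{\prime }\left( r\right) $. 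Third, writing $a:=R-1>0$ and $b:=1-r>0$ so that $R-r=a+b$, the inequality $\left( a-b\right) ^{2}\geq 0$ rewrites as $\left( R-1\right) \left( 1-r\right) \leq \frac{1}{4}\left( R-r\right) ^{2}$, equivalently $\frac{\left( R-1\right) \left( 1-r\right) }{R-r}\leq \frac{1}{4}\left( R-r\right) $.

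For the chain (\ref{e.2.26}) I would then propagate the first bound: replace $\Psi _{f}\left( 1;r,R\right) $ by its supremum to obtain the second line, bound that supremum by $f_{-}^{\prime }\left( R\right) -f_{+}^{\prime }\left( r\right) $ for the third line, and finally invoke $\left( R-1\right) \left( 1-r\right) \leq \frac{1}{4}\left( R-r\right) ^{2}$ together with the nonnegativity $f_{-}^{\prime }\left( R\right) -f_{+}^{\prime }\left( r\right) \geq 0$ to reach the last line. For the chain (\ref{e.2.7.a}) only the order of the manipulations changes: I first apply the factor estimate $\frac{\left( R-1\right) \left( 1-r\right) }{R-r}\leq \frac{1}{4}\left( R-r\right) $ (legitimate because $\Psi _{f}\left( 1;r,R\right) \geq 0$), then pass to the supremum, and then bound it by $f_{-}^{\prime }\left( R\right) -f_{+}^{\prime }\left( r\right) $.

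The only genuinely non-trivial ingredient is the second fact above, the estimate $\sup_{t\in \left( r,R\right) }\Psi _{f}\left( t;r,R\right) \leq f_{-}^{\prime }\left( R\right) -f_{+}^{\prime }\left( r\right) $; everything else is a direct appeal to Theorem \ref{t.2.3}, a trivial supremum bound, or the quadratic inequality $\left( a-b\right) ^{2}\geq 0$. That estimate is secured by the standard squeezing of a chord slope between the one-sided derivatives at the endpoints for convex functions, recalled at the start of Section 3.
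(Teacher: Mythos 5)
Your proposal is correct, and it reaches the same scalar estimates the paper uses (chord-slope monotonicity of a convex function, the bound $\sup_{t}\Psi _{f}\left( t;r,R\right) \leq f_{-}^{\prime }\left( R\right) -f_{+}^{\prime }\left( r\right) $, and $\left( R-1\right) \left( 1-r\right) \leq \frac{1}{4}\left( R-r\right) ^{2}$), but it gets to the starting point of the chain by a genuinely shorter route. The paper does not simply cite Theorem \ref{t.2.3}: it goes back to the operator-level inequality from that theorem's proof, subtracts $f\left( \left\langle \mathfrak{A}_{Q,P}T,T\right\rangle _{2}\right) $ to form the Jensen gap, rewrites it via the identity $\Delta _{f}\left( t;r,R\right) =\frac{\left( R-t\right) \left( t-r\right) }{R-r}\Psi _{f}\left( t;r,R\right) $, proves the whole chain for arbitrary $T$ with $\left\Vert T\right\Vert _{2}=1$ (an inequality it flags as of independent interest), and only then sets $T=P^{1/2}$ so that $\left\langle \mathfrak{A}_{Q,P}T,T\right\rangle _{2}=\limfunc{tr}\left( Q\right) =1$. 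You instead observe that, because $f\left( 1\right) =0$, the quantity $\frac{\left( R-1\right) \left( 1-r\right) }{R-r}\Psi _{f}\left( 1;r,R\right) $ is \emph{identically} the bound $\frac{\left( R-1\right) f\left( r\right) +\left( 1-r\right) f\left( R\right) }{R-r}$ of Theorem \ref{t.2.3}, so the first inequality is a pure rewriting and all the remaining work is elementary real analysis on $\Psi _{f}$. This is cleaner and entirely valid (you correctly use $R>1>r$ to place $1$ in $\left( r,R\right) $, the nonnegativity of $\Psi _{f}$ and of $f_{-}^{\prime }\left( R\right) -f_{+}^{\prime }\left( r\right) $ to justify multiplying inequalities); what you lose relative to the paper is only the intermediate operator inequality (\ref{e.2.33}) valid for general unit $T$, which the author records as a byproduct.
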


\begin{proof}
By denoting 
\begin{equation*}
\Delta _{f}\left( t;r,R\right) :=\frac{\left( t-r\right) f\left( R\right)
+\left( R-t\right) f\left( r\right) }{R-r}-f\left( t\right) ,\quad t\in %
\left[ r,R\right]
\end{equation*}%
we have%
\begin{align}
\Delta _{f}\left( t;r,R\right) & =\frac{\left( t-r\right) f\left( R\right)
+\left( R-t\right) f\left( r\right) -\left( R-r\right) f\left( t\right) }{R-r%
}  \label{e.2.28} \\
& =\frac{\left( t-r\right) f\left( R\right) +\left( R-t\right) f\left(
r\right) -\left( T-t+t-r\right) f\left( t\right) }{R-r}  \notag \\
& =\frac{\left( t-r\right) \left[ f\left( R\right) -f\left( t\right) \right]
-\left( R-t\right) \left[ f\left( t\right) -f\left( r\right) \right] }{M-m} 
\notag \\
& =\frac{\left( R-t\right) \left( t-r\right) }{R-r}\Psi _{f}\left(
t;r,R\right)  \notag
\end{align}%
for any $t\in \left( r,R\right) .$

From the proof of Theorem \ref{t.2.3} we have%
\begin{align}
& \left\langle f\left( \mathfrak{A}_{Q,P}\right) T,T\right\rangle _{2}
\label{e.2.29} \\
& \leq \frac{f\left( r\right) }{R-r}\left\langle \left( R1_{\mathcal{B}%
_{2}\left( H\right) }-\mathfrak{A}_{Q,P}\right) T,T\right\rangle _{2}+\frac{%
f\left( R\right) }{R-r}\left\langle \left( \mathfrak{A}_{Q,P}-r1_{\mathcal{B}%
_{2}\left( H\right) }\right) T,T\right\rangle _{2}  \notag \\
& =\frac{\left( \left\langle \mathfrak{A}_{Q,P}T,T\right\rangle
_{2}-r\right) f\left( R\right) +\left( R-\left\langle \mathfrak{A}%
_{Q,P}T,T\right\rangle _{2}\right) f\left( r\right) }{R-r}  \notag
\end{align}%
for any $T\in \mathcal{B}_{2}\left( H\right) ,$ $\left\Vert T\right\Vert
_{2}=1.$

This implies that%
\begin{align}
0& \leq \left\langle f\left( \mathfrak{A}_{Q,P}\right) T,T\right\rangle
_{2}-f\left( \left\langle \mathfrak{A}_{Q,P}T,T\right\rangle _{2}\right)
\label{e.2.30} \\
& \leq \frac{\left( \left\langle \mathfrak{A}_{Q,P}T,T\right\rangle
_{2}-r\right) f\left( R\right) +\left( R-\left\langle \mathfrak{A}%
_{Q,P}T,T\right\rangle _{2}\right) f\left( r\right) }{R-r}-f\left(
\left\langle \mathfrak{A}_{Q,P}T,T\right\rangle _{2}\right)  \notag \\
& =\Delta _{f}\left( \left\langle \mathfrak{A}_{Q,P}T,T\right\rangle
_{2};r,R\right)  \notag \\
& =\frac{\left( R-\left\langle \mathfrak{A}_{Q,P}T,T\right\rangle
_{2}\right) \left( \left\langle \mathfrak{A}_{Q,P}T,T\right\rangle
_{2}-r\right) }{R-r}\Psi _{f}\left( \left\langle \mathfrak{A}%
_{Q,P}T,T\right\rangle _{2};r,R\right)  \notag
\end{align}%
for any $T\in \mathcal{B}_{2}\left( H\right) ,$ $\left\Vert T\right\Vert
_{2}=1.$

Since 
\begin{align}
\Psi _{f}\left( \left\langle \mathfrak{A}_{Q,P}T,T\right\rangle
_{2};r,R\right) & \leq \sup_{t\in \left( r,R\right) }\Psi _{f}\left(
t;r,R\right)  \label{e.2.31} \\
& =\sup_{t\in \left( r,R\right) }\left[ \frac{f\left( R\right) -f\left(
t\right) }{R-t}-\frac{f\left( t\right) -f\left( r\right) }{t-r}\right] 
\notag \\
& \leq \sup_{t\in \left( r,R\right) }\left[ \frac{f\left( R\right) -f\left(
t\right) }{R-t}\right] +\sup_{t\in \left( r,R\right) }\left[ -\frac{f\left(
t\right) -f\left( r\right) }{t-r}\right]  \notag \\
& =\sup_{t\in \left( r,R\right) }\left[ \frac{f\left( R\right) -f\left(
t\right) }{R-t}\right] -\inf_{t\in \left( r,R\right) }\left[ \frac{f\left(
t\right) -f\left( r\right) }{t-r}\right]  \notag \\
& =f_{-}^{\prime }\left( R\right) -f_{+}^{\prime }\left( r\right) ,  \notag
\end{align}%
and, obviously%
\begin{equation}
\frac{1}{R-r}\left( R-\left\langle \mathfrak{A}_{Q,P}T,T\right\rangle
_{2}\right) \left( \left\langle \mathfrak{A}_{Q,P}T,T\right\rangle
_{2}-r\right) \leq \frac{1}{4}\left( R-r\right) ,  \label{e.2.32}
\end{equation}%
then by (\ref{e.2.30})-(\ref{e.2.32}) we have%
\begin{align}
0& \leq \left\langle f\left( \mathfrak{A}_{Q,P}\right) T,T\right\rangle
_{2}-f\left( \left\langle \mathfrak{A}_{Q,P}T,T\right\rangle _{2}\right)
\label{e.2.33} \\
& \leq \frac{\left( R-\left\langle \mathfrak{A}_{Q,P}T,T\right\rangle
_{2}\right) \left( \left\langle \mathfrak{A}_{Q,P}T,T\right\rangle
_{2}-r\right) }{R-r}\Psi _{f}\left( \left\langle \mathfrak{A}%
_{Q,P}T,T\right\rangle _{2};r,R\right)  \notag \\
& \leq \frac{\left( R-\left\langle \mathfrak{A}_{Q,P}T,T\right\rangle
_{2}\right) \left( \left\langle \mathfrak{A}_{Q,P}T,T\right\rangle
_{2}-r\right) }{R-r}\sup_{t\in \left( r,R\right) }\Psi _{f}\left(
t;r,R\right)  \notag \\
& \leq \left( R-\left\langle \mathfrak{A}_{Q,P}T,T\right\rangle _{2}\right)
\left( \left\langle \mathfrak{A}_{Q,P}T,T\right\rangle _{2}-r\right) \frac{%
f_{-}^{\prime }\left( R\right) -f_{+}^{\prime }\left( r\right) }{R-r}  \notag
\\
& \leq \frac{1}{4}\left( R-r\right) \left[ f_{-}^{\prime }\left( R\right)
-f_{+}^{\prime }\left( r\right) \right]  \notag
\end{align}%
for any $T\in \mathcal{B}_{2}\left( H\right) ,$ $\left\Vert T\right\Vert
_{2}=1.$

This inequality is of interest in itself.

Now, if we take in (\ref{e.2.33}) $T=P^{1/2},$ then we get the desired
result (\ref{e.2.26}).

The inequality (\ref{e.2.7.a}) is obvious from \ (\ref{e.2.26}).
\end{proof}

\begin{remark}
\label{r.2.4}If we consider the convex normalized function $f\left( t\right)
=t^{2}-1,$ then%
\begin{equation*}
\Psi _{f}\left( t;r,R\right) =\frac{R^{2}-t^{2}}{R-t}-\frac{t^{2}-r^{2}}{t-r}%
=R-r,\text{ }t\in \left( r,R\right)
\end{equation*}%
and we get from (\ref{e.2.26}) the simple inequality%
\begin{equation}
0\leq \chi ^{2}\left( Q,P\right) \leq \left( R-1\right) \left( 1-r\right)
\label{e.2.34}
\end{equation}%
for $Q,P\in S_{1}\left( H\right) ,$ with $P$ invertible and satisfying the
condition (\ref{e.2.16}), which is better than (\ref{e.2.23}).

If we take the convex normalized function $f\left( t\right) =t^{-1}-1,$ then
we have%
\begin{equation*}
\Psi _{f}\left( t;r,R\right) =\frac{R^{-1}-t^{-1}}{R-t}-\frac{t^{-1}-r^{-1}}{%
t-r}=\frac{R-r}{rRt},\text{ }t\in \left[ r,R\right] .
\end{equation*}%
Also 
\begin{equation*}
S_{f}\left( Q,P\right) =\chi ^{2}\left( P,Q\right) .
\end{equation*}%
Using (\ref{e.2.26}) we get%
\begin{equation}
0\leq \chi ^{2}\left( P,Q\right) \leq \frac{\left( R-1\right) \left(
1-r\right) }{Rr}  \label{e.2.35}
\end{equation}%
for $Q,P\in S_{1}\left( H\right) ,$ with $Q$ invertible and satisfying the
condition (\ref{e.2.16}).

If we consider the convex function $f\left( t\right) =-\ln t$ defined on $%
\left[ r,R\right] \subset \left( 0,\infty \right) ,$ then 
\begin{eqnarray*}
\Psi _{f}\left( t;r,R\right) &=&\frac{-\ln R+\ln t}{R-t}-\frac{-\ln t+\ln r}{%
t-r} \\
&=&\frac{\left( R-r\right) \ln t-\left( R-t\right) \ln r-\left( t-r\right)
\ln R}{\left( M-t\right) \left( t-m\right) } \\
&=&\ln \left( \frac{t^{R-r}}{r^{R-t}M^{t-r}}\right) ^{\frac{1}{\left(
R-t\right) \left( t-r\right) }},\text{ }t\in \left( r,R\right) .
\end{eqnarray*}%
Then by (\ref{e.2.26}) we have%
\begin{equation}
0\leq U\left( P,Q\right) \leq \ln \left[ r^{\frac{1-R}{R-r}}R^{\frac{r-1}{R-r%
}}\right] \leq \frac{\left( R-1\right) \left( 1-r\right) }{rR}
\label{e.2.36}
\end{equation}%
for $Q,P\in S_{1}\left( H\right) ,$ with $P,$ $Q$ invertible and satisfying
the condition (\ref{e.2.16}).

If we consider the convex function $f\left( t\right) =t\ln t$ defined on $%
\left[ r,R\right] \subset \left( 0,\infty \right) ,$ then%
\begin{equation*}
\Psi _{f}\left( t;r,R\right) =\frac{R\ln R-t\ln t}{R-t}-\frac{t\ln t-r\ln r}{%
t-r},\text{ }t\in \left( r,R\right) ,
\end{equation*}%
which gives that%
\begin{equation*}
\Psi _{f}\left( 1;r,R\right) =\frac{R\ln R}{R-1}-\frac{r\ln r}{1-r}.
\end{equation*}%
Using (\ref{e.2.26}) we get 
\begin{align}
0& \leq U\left( Q,P\right) \leq \ln \left[ R^{\frac{\left( 1-r\right) R}{R-r}%
}r^{\frac{\left( 1-R\right) r}{R-r}}\right]  \label{e.2.37} \\
& \leq \left( R-1\right) \left( 1-r\right) \ln \left[ \left( \frac{R}{r}%
\right) ^{\frac{1}{R-r}}\right]  \notag
\end{align}%
for $Q,P\in S_{1}\left( H\right) ,$ with $P,$ $Q$ invertible and satisfying
the condition (\ref{e.2.16}).
\end{remark}

We also have:

\begin{theorem}
\label{t.2.5}Let $f:[0,\infty )\rightarrow \mathbb{R}$ be a continuous
convex function that is normalized. If $Q,P\in S_{1}\left( H\right) ,$ with $%
P$ invertible, and there exists $R>1>r\geq 0$ such that the condition (\ref%
{e.2.16}) is satisfied, then%
\begin{equation}
0\leq S_{f}\left( Q,P\right) \leq 2\left[ \frac{f\left( r\right) +f\left(
R\right) }{2}-f\left( \frac{r+R}{2}\right) \right] .  \label{e.2.38}
\end{equation}
\end{theorem}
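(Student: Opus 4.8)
The plan is to read (\ref{e.2.38}) as a reverse Jensen inequality for the Araki operator and to prove it along the template used for Theorems \ref{t.2.3} and \ref{t.2.4}: a scalar convexity estimate on $[r,R]$, then the functional calculus property (\ref{P}) applied to $\mathfrak{A}_{Q,P}$, and finally the substitution $T=P^{1/2}$. The decisive preliminary observation is that, since $\mathfrak{A}_{Q,P}P^{1/2}=QP^{-1/2}$, one has $\langle \mathfrak{A}_{Q,P}P^{1/2},P^{1/2}\rangle _{2}=\limfunc{tr}(Q)=1$, so that the normalisation $f(1)=0$ forces $f(\langle \mathfrak{A}_{Q,P}P^{1/2},P^{1/2}\rangle _{2})=0$. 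Consequently $S_{f}(Q,P)$ is exactly the Jensen gap $\langle f(\mathfrak{A}_{Q,P})P^{1/2},P^{1/2}\rangle _{2}-f(\langle \mathfrak{A}_{Q,P}P^{1/2},P^{1/2}\rangle _{2})$, while the right-hand side of (\ref{e.2.38}) is twice the midpoint convexity deficit of $f$ on $[r,R]$. The condition (\ref{e.2.16}) is what guarantees $\limfunc{Sp}(\mathfrak{A}_{Q,P})\subseteq \lbrack r,R]$, legitimising every use of (\ref{P}).

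First I would reuse the chord estimate already obtained in the proof of Theorem \ref{t.2.3}: applying (\ref{P}) to the scalar inequality $f(t)\leq \frac{(R-t)f(r)+(t-r)f(R)}{R-r}$ on $[r,R]$ gives, via (\ref{e.2.22}), for every $T\in \mathcal{B}_{2}(H)$ with $\left\Vert T\right\Vert _{2}=1$,
\begin{equation*}
\langle f(\mathfrak{A}_{Q,P})T,T\rangle _{2}\leq \frac{(R-b)f(r)+(b-r)f(R)}{R-r}=:L(b),\qquad b:=\langle \mathfrak{A}_{Q,P}T,T\rangle _{2}\in \lbrack r,R],
\end{equation*}
where $L$ is the affine chord of $f$ through $(r,f(r))$ and $(R,f(R))$. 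It therefore suffices to establish the purely scalar reverse Jensen inequality $L(b)-f(b)\leq f(r)+f(R)-2f(\frac{r+R}{2})$ for every $b\in \lbrack r,R]$.

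The main (and essentially only) obstacle is this scalar step, and I would settle it by a reflection trick. Since $L$ is affine and $b+(r+R-b)=r+R$, one has $L(b)+L(r+R-b)=2L(\frac{r+R}{2})=f(r)+f(R)$; combining this with the chord bound $L(r+R-b)\geq f(r+R-b)$, valid because $r+R-b\in \lbrack r,R]$, yields $L(b)\leq f(r)+f(R)-f(r+R-b)$. On the other hand, midpoint convexity applied to the pair $\{b,\,r+R-b\}$, whose average is $\frac{r+R}{2}$, gives $f(b)+f(r+R-b)\geq 2f(\frac{r+R}{2})$. Subtracting $f(b)$ from the previous display and inserting this last inequality produces $L(b)-f(b)\leq f(r)+f(R)-[f(b)+f(r+R-b)]\leq f(r)+f(R)-2f(\frac{r+R}{2})$, as required. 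Note that only convexity of $f$ is used, so no differentiability hypothesis is needed here.

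Finally I would specialise $T=P^{1/2}$, where $P\in S_{1}(H)$ is invertible, so that $\left\Vert P^{1/2}\right\Vert _{2}^{2}=\limfunc{tr}(P)=1$. By the opening observation $b=1$ and $f(b)=f(1)=0$, whence the scalar estimate reads $S_{f}(Q,P)\leq f(r)+f(R)-2f(\frac{r+R}{2})$, which is the upper bound in (\ref{e.2.38}); the lower bound $0\leq S_{f}(Q,P)$ is already furnished by Theorem \ref{t.2.1}. This completes the argument.
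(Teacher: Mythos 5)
Your argument is correct, and the operator-theoretic skeleton (chord bound via the property (\ref{P}) applied to $\mathfrak{A}_{Q,P}$ on $[r,R]$, then $T=P^{1/2}$ with $\left\langle \mathfrak{A}_{Q,P}P^{1/2},P^{1/2}\right\rangle _{2}=\limfunc{tr}\left( Q\right) =1$ and $f(1)=0$) matches what the paper does. Where you genuinely diverge is in the scalar step that converts the chord deficit $L(b)-f(b)$ into the midpoint deficit $f(r)+f(R)-2f\bigl(\tfrac{r+R}{2}\bigr)$. The paper imports the reverse of the normalized Jensen functional inequality from \cite{SSDBul} (its two-point case (\ref{e.3.40})), obtaining the pointwise estimate (\ref{e.3.41}) with the factor $1+\tfrac{2}{R-r}\bigl\vert t-\tfrac{r+R}{2}\bigr\vert$, promotes that to the operator order, and only then bounds the factor by $2$ to reach (\ref{e.2.38}). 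Your reflection trick --- pairing $b$ with $r+R-b$, using $L(b)+L(r+R-b)=f(r)+f(R)$ together with $f(r+R-b)\leq L(r+R-b)$ and midpoint convexity $f(b)+f(r+R-b)\geq 2f\bigl(\tfrac{r+R}{2}\bigr)$ --- is an elementary, self-contained substitute that needs no external lemma and no differentiability. What you lose is the sharper intermediate inequality (\ref{e.3.42}) that the paper records before discarding the modulus term $\left\langle \bigl\vert \mathfrak{A}_{Q,P}-\tfrac{r+R}{2}1_{\mathcal{B}_{2}\left( H\right) }\bigr\vert T,T\right\rangle _{2}$; what you gain is brevity and transparency, since at $t=1$ your chain and the paper's collapse to the same bound. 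Both proofs legitimately use condition (\ref{e.2.16}) to place $b=\left\langle \mathfrak{A}_{Q,P}T,T\right\rangle _{2}$ in $[r,R]$, and the nonnegativity $0\leq S_{f}\left( Q,P\right) $ is correctly delegated to Theorem \ref{t.2.1} in both.
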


\begin{proof}
We recall the following result (see for instance \cite{SSDBul})\ that
provides a refinement and a reverse for the weighted Jensen's discrete
inequality:%
\begin{align}
& n\min_{i\in \left\{ 1,...,n\right\} }\left\{ p_{i}\right\} \left[ \frac{1}{%
n}\sum_{i=1}^{n}f\left( x_{i}\right) -f\left( \frac{1}{n}\sum_{i=1}^{n}x_{i}%
\right) \right]  \label{e.2.39} \\
& \leq \frac{1}{P_{n}}\sum_{i=1}^{n}p_{i}f\left( x_{i}\right) -f\left( \frac{%
1}{P_{n}}\sum_{i=1}^{n}p_{i}x_{i}\right)  \notag \\
& \leq n\max_{i\in \left\{ 1,...,n\right\} }\left\{ p_{i}\right\} \left[ 
\frac{1}{n}\sum_{i=1}^{n}f\left( x_{i}\right) -f\left( \frac{1}{n}%
\sum_{i=1}^{n}x_{i}\right) \right] ,  \notag
\end{align}%
where $f:C\rightarrow \mathbb{R}$ is a convex function defined on the convex
subset $C$ of the linear space $X,$ $\left\{ x_{i}\right\} _{i\in \left\{
1,...,n\right\} }\subset C$ are vectors and $\left\{ p_{i}\right\} _{i\in
\left\{ 1,...,n\right\} }$ are nonnegative numbers with $P_{n}:=%
\sum_{i=1}^{n}p_{i}>0.$

For $n=2$ we deduce from (\ref{e.2.8}) that%
\begin{align}
& 2\min \left\{ s,1-s\right\} \left[ \frac{f\left( x\right) +f\left(
y\right) }{2}-f\left( \frac{x+y}{2}\right) \right]  \label{e.3.40} \\
& \leq sf\left( x\right) +\left( 1-s\right) f\left( y\right) -f\left(
sx+\left( 1-s\right) y\right)  \notag \\
& \leq 2\max \left\{ s,1-s\right\} \left[ \frac{f\left( x\right) +f\left(
y\right) }{2}-f\left( \frac{x+y}{2}\right) \right]  \notag
\end{align}%
for any $x,y\in C$ and $s\in \left[ 0,1\right] .$

Now, if we use the second inequality in (\ref{e.3.40}) for $x=r,$ $y=R,$ $s=%
\frac{R-t}{R-r}$ with $t\in \left[ r,R\right] ,$ then we have%
\begin{align}
& \frac{\left( R-t\right) f\left( r\right) +\left( t-r\right) f\left(
R\right) }{R-r}-f\left( t\right)  \label{e.3.41} \\
& \leq 2\max \left\{ \frac{R-t}{R-r},\frac{t-r}{R-r}\right\} \left[ \frac{%
f\left( r\right) +f\left( R\right) }{2}-f\left( \frac{r+R}{2}\right) \right]
\notag \\
& =\left[ 1+\frac{2}{R-r}\left\vert t-\frac{r+R}{2}\right\vert \right] \left[
\frac{f\left( r\right) +f\left( R\right) }{2}-f\left( \frac{r+R}{2}\right) %
\right]  \notag
\end{align}%
for any $t\in \left[ r,R\right] .$

This implies in the operator order of $\mathcal{B}\left( \mathcal{B}%
_{2}\left( H\right) \right) $%
\begin{align*}
& \frac{\left( R1_{\mathcal{B}_{2}\left( H\right) }-\mathfrak{A}%
_{Q,P}\right) f\left( r\right) +\left( \mathfrak{A}_{Q,P}-r1_{\mathcal{B}%
_{2}\left( H\right) }\right) f\left( R\right) }{R-r}-f\left( \mathfrak{A}%
_{Q,P}\right) \\
& \leq \left[ \frac{f\left( r\right) +f\left( R\right) }{2}-f\left( \frac{r+R%
}{2}\right) \right] \\
& \times \left[ 1_{\mathcal{B}_{2}\left( H\right) }+\frac{2}{R-r}\left\vert 
\mathfrak{A}_{Q,P}-\frac{r+R}{2}1_{\mathcal{B}_{2}\left( H\right)
}\right\vert \right]
\end{align*}%
which implies that%
\begin{align}
0& \leq \left\langle f\left( \mathfrak{A}_{Q,P}\right) T,T\right\rangle
_{2}-f\left( \left\langle \mathfrak{A}_{Q,P}T,T\right\rangle _{2}\right)
\label{e.3.42} \\
& \leq \frac{\left( \left\langle \mathfrak{A}_{Q,P}T,T\right\rangle
_{2}-r\right) f\left( R\right) +\left( R-\left\langle \mathfrak{A}%
_{Q,P}T,T\right\rangle _{2}\right) f\left( r\right) }{R-r}-f\left(
\left\langle \mathfrak{A}_{Q,P}T,T\right\rangle _{2}\right)  \notag \\
& \leq \left[ \frac{f\left( r\right) +f\left( R\right) }{2}-f\left( \frac{r+R%
}{2}\right) \right]  \notag \\
& \times \left[ 1+\frac{2}{R-r}\left\langle \left\vert \mathfrak{A}_{Q,P}-%
\frac{r+R}{2}1_{\mathcal{B}_{2}\left( H\right) }\right\vert T,T\right\rangle
_{2}\right]  \notag \\
& \leq 2\left[ \frac{f\left( r\right) +f\left( R\right) }{2}-f\left( \frac{%
r+R}{2}\right) \right]  \notag
\end{align}%
for any $T\in \mathcal{B}_{2}\left( H\right) ,$ $\left\Vert T\right\Vert
_{2}=1.$

This is an inequality of interest in itself.

If we take in (\ref{e.3.42}) $T=P^{1/2},$ $P\in S_{1}\left( H\right) ,$ then
we get the desired result (\ref{e.2.38}).
\end{proof}

\begin{remark}
\label{r.2.5}If we take $f\left( t\right) =t^{2}-1$ in (\ref{e.2.38}), then
we get%
\begin{equation*}
0\leq \chi ^{2}\left( Q,P\right) \leq \frac{1}{2}\left( R-r\right) ^{2}
\end{equation*}%
for $Q,P\in S_{1}\left( H\right) ,$ with $P$ invertible and satisfying the
condition (\ref{e.2.16}), which is not as good as (\ref{e.2.34}).

If we take in (\ref{e.2.38}) $f\left( t\right) =t^{-1}-1,$ then we have%
\begin{equation}
0\leq \chi ^{2}\left( P,Q\right) \leq \frac{\left( R-r\right) ^{2}}{rR\left(
r+R\right) }  \label{e.2.43}
\end{equation}%
for $Q,P\in S_{1}\left( H\right) ,$ with $P$ invertible and satisfying the
condition (\ref{e.2.16}).

If we take in (\ref{e.2.38}) $f\left( t\right) =-\ln t,$ then we have%
\begin{equation}
0\leq U\left( P,Q\right) \leq \ln \left( \frac{\left( R+r\right) ^{2}}{4rR}%
\right)  \label{e.2.44}
\end{equation}%
for $Q,P\in S_{1}\left( H\right) ,$ with $P$ invertible and satisfying the
condition (\ref{e.2.16}).

From (\ref{e.2.19}) we have the following absolute upper bound%
\begin{equation}
0\leq U\left( P,Q\right) \leq \frac{\left( R-r\right) ^{2}}{4rR}
\label{e.2.45}
\end{equation}%
for $Q,P\in S_{1}\left( H\right) ,$ with $P$ invertible and satisfying the
condition (\ref{e.2.16}).

Utilising the elementary inequality $\ln x\leq x-1,$ $x>0,$ we have that%
\begin{equation*}
\ln \left( \frac{\left( R+r\right) ^{2}}{4rR}\right) \leq \frac{\left(
R-r\right) ^{2}}{4rR},
\end{equation*}%
which shows that (\ref{e.2.44}) is better than (\ref{e.2.45}).
\end{remark}

\end{document}